\documentclass{article}

\usepackage[preprint]{neurips_2025}

\usepackage[english]{babel}
\usepackage{amssymb,amsthm,amsfonts}
\usepackage[utf8]{inputenc} 
\usepackage[T1]{fontenc}    
\usepackage{amsmath,amsthm}
\usepackage{graphicx}
\usepackage{todonotes}
\usepackage{hyperref}       
\usepackage{url}            
\usepackage{booktabs}       
\usepackage{nicefrac}       
\usepackage{microtype}      
\usepackage{xcolor}         
\usepackage[normalem]{ulem}

\usepackage{tikz}
\usetikzlibrary{calc, intersections}
\usetikzlibrary{arrows, shapes, fit}
\usetikzlibrary{positioning}
\usetikzlibrary{arrows.meta}
\usetikzlibrary{matrix,decorations.pathreplacing, calc, positioning,fit}
\usepackage{tkz-graph}
\usetikzlibrary{patterns,snakes}
\usetikzlibrary{arrows, shapes, fit}
\usetikzlibrary{shapes.geometric,calc}
\urlstyle{same}
\usepackage{array}
\usepackage{pgfplots}

\usetikzlibrary{arrows.meta}
\tikzset{main node/.style={rectangle,fill=blue!20,draw=none,minimum size=0.3cm,inner sep=3pt, rounded corners = 3pt,font=\sffamily},}

\newcommand{\fat}{\text{fat}}


\usepackage[utf8]{inputenc} 
\usepackage[T1]{fontenc}    
\usepackage{hyperref}       
\usepackage{url}            
\usepackage{booktabs}       
\usepackage{amsfonts}       
\usepackage{nicefrac}       
\usepackage{microtype}      
\usepackage{xcolor}         
\usepackage{graphicx}
\usepackage{subcaption}

\usepackage{amsmath,amssymb,amsthm}
\usepackage[capitalize,noabbrev]{cleveref}

\theoremstyle{definition}
\newtheorem{theorem}{Theorem}[section]
\newtheorem{lemma}[theorem]{Lemma}
\newtheorem{corollary}[theorem]{Corollary}
\newtheorem{proposition}[theorem]{Proposition}

\newtheorem{definition}[theorem]{Definition}
\newtheorem{remark}[theorem]{Remark}

\newcommand{\T}{\mathsf{T}} 
\newcommand{\N}{\mathbb{N}}

\newcommand{\F}{\mathcal{F}}

\newcommand{\R}{\mathbb{R}}

\newcommand{\Q}{\mathbb{Q}}
\newcommand{\Z}{\mathbb{Z}}
\newcommand{\x}{\mathbf{x}}

\renewcommand{\u}{\mathbf{u}}

\renewcommand{\b}{\mathbf{b}}
\renewcommand{\c}{\mathbf{c}}

\renewcommand{\O}{\mathcal{O}}

\newcommand{\I}{\mathcal{I}}

\newcommand{\0}{\mathbf{0}}

\DeclareMathOperator{\sgn}{sgn} 

\DeclareMathOperator{\argmax}{arg\,max} 

\DeclareMathOperator{\VCdim}{VCdim} 
\DeclareMathOperator{\Pdim}{Pdim} 

\usepackage{bm}
\usepackage{bbm} 

\title{How hard is learning to cut? \\ Trade-offs  and sample complexity}

\author{
  Sammy Khalife \\
  School of Operations Research and Information Engineering\\
  Cornell Tech, Cornell University\\
  \texttt{khalife.sammy@cornell.edu} \\
  \And
  Andrea Lodi \\
  Jacobs Technion-Cornell Institute \\
Cornell Tech and Technion - IIT \\ 
  \texttt{andrea.lodi@cornell.edu} \\
}

\begin{document}


\maketitle

\begin{abstract}
In the recent years, branch-and-cut algorithms  have been the target of data-driven approaches designed  to enhance the decision making in different phases of the algorithm such as branching, or the choice of cutting planes (cuts). In particular, for cutting plane selection two score functions have been proposed in the literature to evaluate the quality of a cut: branch-and-cut tree size and gap closed.  In this paper, we present new sample complexity lower bounds, valid for both scores.  
We show that for a wide family of classes $\mathcal{F}$ that maps an instance to a cut, learning over an unknown distribution of the instances to minimize those scores requires at least (up to multiplicative constants) as many samples as learning from the same class function $\mathcal{F}$ any generic target function (using square loss). 
Our results also extend to the case of learning from a restricted set of cuts, namely those from the Simplex tableau. To the best of our knowledge, these  constitute the first lower bounds for the learning-to-cut framework. We compare our bounds to known upper bounds in the case of neural networks and show they are nearly tight.  We illustrate our results with a graph neural network selection evaluated on set covering and facility location integer programming models and we empirically show that the gap closed score is an effective proxy to minimize the branch-and-cut tree size. Although the gap closed score has been extensively used in the integer programming literature, this is the first principled analysis discussing both scores at the same time both theoretically and computationally.
\end{abstract}

\section{Introduction}

Branch-and-cut algorithms form the cornerstone of integer programming solvers. In recent years, machine learning has been playing a growing role in enhancing those solvers by enabling data-driven decision-making in various components of the algorithm. 
Recent attempts  aim at augmenting those solvers, which often rely on handcrafted heuristics, by training models on  data obtained from solved instances, to predict decisions that lead to faster convergence (which cutting plane -- or cut, for short -- to choose, or which variable to branch on). Specifically referring to cuts, there has been a growing body of work recently. \cite{paulus2022learning}
proposed a neural architecture that employs imitation learning to select cutting planes in mixed-integer linear programs (MILPs). By mimicking a lookahead expert that evaluates the potential impact of cuts on future bounds, their method aims to improve the efficiency of cut selection. In \cite{huang2022learning}, the authors trained a neural network to learn a scoring function evaluating the quality of candidate cuts based on instance-specific features. 
 \cite{tang2020reinforcement} 
 explored the use of deep reinforcement learning to adaptively select cutting planes in integer programming. By formulating cut selection as a Markov Decision Process, their method  trains an agent to make the right cut selection among the Tableaux cuts.  Subsequently, \cite{ling2024learning} 
 addressed the challenge of determining when to stop generating cuts, using reinforcement learning and different features of MILPs to make informed decisions. 
 We refer the reader to the excellent survey \cite{deza2023machine} for a more exhaustive list on previous contributions.

A fundamental question in any learning-based approach for generating cutting planes or making branching decisions during the solving process is how many training samples are needed to ensure good performance across an entire (and potentially unknown) distribution of problem instances. This issue -- referred to as \emph{sample complexity} -- is critical, as it determines the scale of the learning task and directly impacts the feasibility of effectively training  models. Understanding sample complexity helps to overcome some of the inherent challenges in these approaches by providing concrete guidance on how many instances must be solved in order to learn patterns that generalize meaningfully across the distribution.

The motivation for our work stems from the following two observations, leading to two main results.
\begin{enumerate}
    \item \label{ob1} The existing studies applicable to sample complexity of learning-to-cut provide upper bounds for specific learning algorithms, formally referred to as \emph{concept classes}. Those studies are applied to a special family of cutting planes, namely Chv\'atal-Gomory (CG) cuts \cite{Gomory58,Chvatal73}. Specifically, in \cite{balcan2021sample}, the concept class is restricted to functions that return \emph{constant} CG weights applied to any instance. In \cite{cheng2024data}, the CG weights are generated by a neural network taking as input an integer linear program (ILP) instance.\\ 
    \emph{Our contribution} is to provide the first quantitative lower bounds on sample complexity, and study lower bounds that are valid for a wide family of classes. Our lower bounds are discussed in Section \ref{sec:theory} and anticipated in Table \ref{sample-bounds}.

    \item \label{ob2} There are two main scores proposed in the literature to evaluate the quality of a cut. The first one is based on the relative size reduction (or increase) of the branch-and-cut (B\&C) tree size. The second one is the relative improvement in the objective function of the relaxed problem (gap closed, where the gap for a MILP is the relative difference between the value of its linear programming, LP, relaxation and that of its optimal solution). The first score correlates well with the overall running time of the algorithm as it corresponds roughly to the number of LPs solved. However, it is easy to see that it is very expensive to train using the tree size because it requires to solve the problem to optimality to be evaluated. So, the second one could be considered as a proxy of the first, and the natural question we aim at discussing is how good the proxy is both in theory and in practice.\footnote{From the theory side, the upper bounds in \cite{balcan2021sample,cheng2024data} are obtained for the branch-and-cut tree size score, although similar approach would yield the same upper bound for both scores.}\\  
    \emph{Our contribution} is to empirically show the quality of the gap closed proxy and assess the ability of a graph neural network to learn both score functions in practice. Although the gap closed score has been extensively used in the integer programming literature, this is the first principled analysis discussing both scores at the same time both theoretically and computationally. The computational evaluation is conducted in Section \ref{sec:numerical}. 
\end{enumerate}
{
\renewcommand{\arraystretch}{1.2}
\begin{table}[h]
  \caption{Illustration of sample complexity bounds in the case of ReLU neural networks with $W$ weights and $L$ layers, for IP instances with $n$ variables and $m$ constraints, verifying    $M \geq \sum_{i=1}^m \sum_{j=1}^n |A_{ij}| + \sum_{i=1}^m |b_i| $. Here,  $\overline{W} = W - w_1(n+1) m $ where $w_1$ is the number of neurons in the first hidden layer.  The bounds in \textcolor{blue}{blue} are our main theoretical contribution.}
  \label{sample-bounds}
  \centering
  \begin{tabular}{lll}
    \toprule
    \cmidrule(r){1-3}
    Setting     & \multicolumn{2}{c}{B\&C tree or gap closed scores}    \\ 
     & Lower Bound & Upper Bound     \\
    \midrule
   all CG-cuts &  $\textcolor{blue}{\Omega(  \overline{W} L \log(\frac{\overline{W}}{L}) )}$  &   $\mathcal{O}(LW \log (U+m) + W \log  M)$ \\ 
    tableau cuts     & $\textcolor{blue}{\Omega( \overline{W} L \log(\frac{\overline{W}}{L}) )}$   &$\mathcal{O}(LW \log(U+t))$  \\ 
    \bottomrule
  \end{tabular}
\end{table}
}

We would like to point out that our work can be put more broadly in the spectrum of \emph{algorithm selection},  where selecting algorithms based on specific instances is allowed. For example, this is the case of \cite{rice1976algorithm,gupta2016pac} where the
sample complexity of learning mappings from instances to algorithms for particular problems is explored. Our
approach is also related to recent work on algorithm design with predictions, see, e.g., \cite{mitzenmacher2022algorithms} and the references therein.

The remainder of the paper is organized as follows. In Section \ref{sec:preliminaries}, we properly define ILPs and its most successful solution method, i.e., branch and cut, as well as we give the basic definitions of learning theory. In Section \ref{sec:theory}, we discuss our main theoretical result on sample complexity lower bounds. In Section \ref{sec:numerical}, we report on the computational investigation involving the two different score functions to evaluate cut quality. Finally, in Section \ref{sec:conclusions}, we draw some conclusions and outline open research questions.

\section{Preliminaries}
\label{sec:preliminaries}

In this section, we provide preliminaries for both ILP cutting plane methodology and learning theory.

\subsection{Branch and cut and cutting planes}
\label{section:branchandcut}

We consider the ILP in the form
    \begin{equation}
        \label{def:ILP}
        \max\{\c^\T \x : A \x \leq \b, \x \geq 0, \x \in \Z^n\},
    \end{equation}
where $m,n \in \N_+$, and $A \in \Q^{m \times n},\ \b \in \Q^m,\ \c \in \R^n$.\footnote{The ILP \eqref{def:ILP} is called MILP if a subset of the variables is allowed to take continuous values.}

The algorithms implemented in every (M)ILP solver are variations of a framework called {\em branch and cut}. 
In that algorithm, each iteration maintains: 1) a current best (integral) solution guess,\footnote{Such guess would likely be $-\infty$ initially.} and 2) a list of polyhedra, each a subset of the original ILP relaxation. At each step, one polyhedron is selected and its continuous LP solution is computed. If the objective is worse than the current guess, the polyhedron is discarded. If the solution is integral, the guess is updated and the polyhedron is removed. Otherwise, the algorithm either adds \emph{cutting planes} -- valid inequalities that tighten the polyhedron -- or \emph{branches}. In branching, a variable $\x_i$ whose current value $\x^*_i$ is  fractional is chosen, and the polyhedron is split using $\x_i \leq \lfloor \x^*_i \rfloor$ and $\x_i \geq \lfloor \x^*_i \rfloor + 1$. These two new polyhedra replace the original one. This process builds a branch-and-cut tree, with each node representing a polyhedron. The algorithm stops when the list is empty, returning the best guess as optimal. Often, a bound $B$ is set on the tree size; if exceeded, the algorithm terminates early and returns the current best guess.

There are many different strategies to generate cutting planes in branch-and-cut~\cite{conforti2014integer,nemhauser1988integer,sch}. The oldest one is due to Gomory \cite{Gomory58} and later generalized by Chv\'atal \cite{Chvatal73}, so the family of resulting cutting planes is called Chv\'atal-Gomory cuts. Namely, for any $\x \in \Z^n$ satisfying $A \x \le \b$, then the inequality $\u A \x \le \lfloor \u \b \rfloor$ is valid for $S$ for all $\u \ge \0$ such that $\u A \in \Z^n$ and is called a CG cut. Gomory suggested to read $\u$ as the inverse of the basis of the tableau when the LP relaxation is solved by the Simplex method \cite{Gomory58}. Chv\'atal generalized the procedure to any $\u$ \cite{Chvatal73}.

Since the number of CG cuts that can derived at any iteration of the branch-and-cut algorithm is very large, any MILP solver implements its own cut selection strategy, i.e., decides which cuts are added to the current LP relaxation. The cut selection is performed by sophisticated, handcrafted heuristics and, as anticipated, the use of modern statistical learning to enhance these heuristics has been recently studied. The standard approach that has been used and that we inherit here is to decide the \emph{single} next cut to be added within the CG family (or part of it). To do so, we need a score function that evaluates the quality of the cut, and two such functions have been investigated. Ideally, the branch-and-cut tree size \emph{after} the addition of the cut is the right measure since most of the computing time is spent on solving the individual LPs in the nodes of the algorithm. However, this scoring function is very expensive to evaluate and, so far, has been used for theoretical purposes only. Instead, MILP technology generally measures the quality of a cut using the gap closed, i.e., the measure of the improvement of the LP relaxation after the addition of the cut. Of course, this is cheaper to evaluate (requires to solve one single LP per cut), but still too expensive in practice for performing cut selection, so the idea of \emph{learning} such a score.\footnote{It is worth mentioning that no solver adds one cutting plane at a time, but cuts are instead added in groups, called rounds. Analyzing such a procedure would be way harder, so literature studies -- as well as our paper -- concentrate on this simplified version.}

It is interesting to note that, although the gap closed could be seen as a proxy of the branch-and-cut tree size, the two scores are hard to properly compare. More precisely, a cut could reduce significantly the tree size without even cutting off the optimal (fractional) solution of the LP relaxation, while a cut that does cut it off could have no effect long term, i.e., in reducing the tree size. 

For example, consider the ILP 
$\{\max  5x_1 + 8x_2~|~x_1 + x_2 \leq 6, 5x_1 + 9x_2 \leq 45, x_1, x_2 \geq 0, x_1, x_2 \in \Z\}$, 
whose fractional solution is $x^{*} = (\frac{9}{4}, \frac{15}{4})$. It can be shown that one of the CG cuts derived from the optimal tableau leads to the constraint $4 x_1 + 7 x_2 \leq 35$. Adding this constraint leads to a new fractional solution $ (\frac{7}{3}, \frac{11}{3})$, located on the right (i.e., with greater $x$-coordinate) of the solution of the original formulation. Hence, supposing branching is performed first on $x_1$ then $x_2$, this leads to a larger branch-and-cut tree, with more LPs to be solved. However, this CG cut actually cuts off the fractional solution, hence improves the gap closed score.


\subsection{Learning theory}

\begin{definition}[Restriction of a concept class]\label{def:restriction}
Let $\mathcal{F}$ be a concept class  (i.e., set of functions) from $\R^{d} \rightarrow \R$. For any $i \in [d]$ and $c \in \R^{d-i}$, we refer to $\mathcal{F}_{i}(c)$ as a shorthand for 
$$\mathcal{F}[i](c) := \{ x \mapsto f(x_1, \cdots, x_i, c): f \in \mathcal{F}\}$$  
\end{definition}
 
\begin{definition}[VC-dimension of a real output concept class]\label{def:pseudo-dimension} 
    For any positive integer $t$, we say that a set $\{I_1, ..., I_t\} \subseteq \I$ is shattered by a concept class  $\mathcal{E}$ defined on $\mathcal{I}$ taking $\{0,1\}$-values if 
    \begin{equation*}\label{eq:def:pseudodimension}
        2^t = \left|\left\{ \left( f(I_1),\dots, f(I_t) \right) : f \in \mathcal{E}\right\}\right|
    \end{equation*}
    
    The \emph{VC dimension of $\mathcal{E}$}, denoted as $\VCdim(\mathcal{E})  \in \mathbb{N} \cup \{+\infty\}$, is the size of the largest set that can be shattered by $\mathcal{E}$. 
    
     If $\F$ is a non-empty collection of functions from an input space $\I$ to $\R$. Let $\sgn(\mathcal{F}):=\{\sgn(f) \in \mathcal{F}\}$ where $\sgn(x)=\mathbf{1}_{x > 0}$. Then, $\VCdim(\mathcal{F})$ is by definition $\VCdim(\sgn(\mathcal{F}))$ where we adopt the standard defintion of $\VCdim $ for $\{0,1\}$-function described above.
    \end{definition}

\begin{definition}[Fat-shattering dimension] Let $\gamma > 0$. With the same notations as Definition \ref{eq:def:pseudodimension}, we say that the function class $\mathcal{F}$ fat-shatters $I_1, \cdots, I_t$ with precision $\gamma$ provided there exists $r \in \R^{t}  $ such that for every labeling $(y_1, \cdots, y_t) \in \{-1, 1\}^{t}$, there exists $g \in \mathcal{F}$, such that $g(I_i) \geq r_i + \gamma$ if $y_i = -1$ and $g(I_i) \leq r_i - \gamma$ if $y_i =1$. In such conditions, $r$ is called the witness of the shattering. The fat-shattering dimension of $\mathcal{F}$ with precision $\gamma$, noted $\fat_{\F}(\gamma)$ is the size of the largest  that can be fat-shattered by $\mathcal{F}$.
\end{definition}

We are interested in a  statistical learning  problem of the following form, given a fixed parameterized function class defined by some $h$ with output space $\O = \R$:
\begin{equation}\label{eq:learn}
    \min_{f \in \F}\; \mathbb{E}_{I \sim \mathcal{D}}[h(I, f(I))],
\end{equation} 
for an unknown distribution $\mathcal{D}$, given access to i.i.d. samples $I_1, \ldots, I_t$ from $\mathcal{D}$. We restrict to learning problems of a function $f$ to minimize a given functional measuring the quality of a cutting plane in a branch-and-cut type of algorithm. 
In this problem, one tries to learn the best decision $f \in \mathcal{F}$ for minimizing an expected ``score'' with respect to an unknown distribution given only samples from the distribution. In our branch-and-cut framework, we assume that we have access to an oracle returning the performance of the cutting plane after adding it to the ILP instance, that will be accounted for in the choice of the function $h$. We are interested in two performance scores: one related to the relative variation of the size of the branch-and-tree after adding the cut, and the gap closed score. Both will be formally defined in Section \ref{section:branchandcut}. Note that Formulation \ref{eq:learn} is unsupervised in the sense that we are not trying to perform standard regression to some observed values. 
However, we can reduce to the supervised learning framework developped in \cite[Section 9]{anthony2009neural} by considering only 0 labels and using the square loss function. Therefore, all known results on sample complexity of learning can transfer to this setup.  

In this context,  a \emph{learning algorithm}\footnote{In supervised learning, typically the domain is formed by $\cup_{m=1}^{\infty}(\mathcal{I} \times \R)^{m}$.} $L$ for $\mathcal{F}$  is a function taking as input a fixed simple sample of arbitrary size, and returning a function in $\mathcal{F}$:
$$ L: \bigcup_{m=1}^{\infty}\mathcal{I}^{m} \rightarrow \mathcal{F}$$

Given $\epsilon \in (0,1)$, $\delta \in (0,1)$, the \emph{sample complexity of learning} $m_{0} (\epsilon, \delta) \in \N$ of $L$ is the smallest  integer (allowed to be $+\infty $) such that for any $m \geq m_{0} (\epsilon, \delta)$, for any probability distribution $\mathcal{D}$ on $\mathcal{I}$, the algorithm $L$ evaluated at ``test time'' on instance $I$ is is in average closs to the solution on the entire distribution up to $\epsilon$:
$$\left| \mathbb{E}_{I\sim \mathcal{D}}[ h\left(I, L(I_1, \cdots, I_m)(I)\right) ]- \min_{f \in \F}\; \mathbb{E}_{I \sim \mathcal{D}}[h(I,f(I))] \right| < \epsilon $$ with probability $1-\delta$ over i.i.d  samples $I_1, \cdots, I_m$ drawn following $\mathcal{D}$.



    In the case of binary functions, VC-dimension  gives a direct way to bound \emph{from above and below} learning sample complexity \cite[Theorem 5.4]{anthony2009neural}.
    For real output functions, the pseudo-dimension remain useful to upper-bound on \emph{uniform convergence} (UC). UC typically requires the difference $\left|\frac{1}{m}\sum_{i=1}^m h(I_i,f(I_i)) - \mathbb{E}_{I \sim \mathcal{D}}\;[h(I,f(I))]\right| $ to be bounded by  $\epsilon $ for every $f \in \mathcal{F}$ and for every distribution. However, the sample complexity of learning can be smaller than that of UC. This leads to the sample complexity of UC to be an upper-bound on the sample complexity of learning via Empirical Risk Minimization (ERM), which is itself greater than the sample complexity of learning in general, as there could be other algorithms performing better than ERM.
    In other words, uniform convergence guarantees that ERM will perform well, since the sample average closely matches the true expectation across all hypotheses. Good performance from ERM can still occur without full uniform convergence, and there may exist other learning algorithms that outperform ERM. 
    
    Therefore, lower-bounds on Pseudo-dimension or VC-dimensions mainly apply to UC, and do not necessarily reflect the true sample complexity of learning. This surprising gap was first highlighted in \cite{shalev2009stochastic}  and further explored in \cite{feldman2016generalization}. As a consequence, to obtain lower-bounds of learning sample complexity, one cannot \emph{a priori} use  standard traditional lower-bounds of VC-dimension, and the analysis has to be performed carefully depending on the concept class considered. In this article, we will rely on the following result giving  a general lower-bound on the sample complexity of learning.

\begin{theorem}\cite[Theorem 19.5]{anthony2009neural}\label{theorem:lowerbound:samplecomplexity}
 Let $\mathcal{F}$ be a class of functions from $X$ to $[0,1]$. Then for any  $0 < \epsilon < 1$, $0 < \delta < 10^{-2}$, any learning algorithm $L$ for $\mathcal{F}$ has sample complexity  $m_L(\epsilon, \delta) $ satisfying for every $0 < \alpha < \frac{1}{4}$,
$$m_L(\epsilon, \delta) \geq \frac{\fat_{\mathcal{F}}(\frac{\epsilon}{\alpha})  - 1 }{16 \alpha}   $$
\end{theorem}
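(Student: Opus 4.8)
The plan is to prove this via the classical minimax / hard-distribution method: I would exhibit a finite family of distributions on which no learner using fewer than $\frac{\fat_{\mathcal{F}}(\epsilon/\alpha)-1}{16\alpha}$ samples can be $\epsilon$-accurate with confidence $1-\delta$. Write $\gamma = \epsilon/\alpha$ and $d = \fat_{\mathcal{F}}(\gamma)$. The starting point is the definition of fat-shattering itself: there exist inputs $I_1,\dots,I_d$ and a witness $r \in \R^d$ such that every sign pattern $\sigma \in \{-1,+1\}^d$ is realized, i.e.\ some $g_\sigma \in \mathcal{F}$ satisfies $g_\sigma(I_j) \ge r_j + \gamma$ when $\sigma_j = -1$ and $g_\sigma(I_j) \le r_j - \gamma$ when $\sigma_j = +1$. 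These $d$ inputs together with the $2^d$ witnessing functions supply the combinatorial richness the rest of the argument exploits.

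The key modeling choice is to make the marginal \emph{concentrated}, not uniform. I would place total mass $\beta$ (to be fixed proportional to $\alpha$) spread uniformly over $I_1,\dots,I_d$, and the remaining mass $1-\beta$ on a fixed dummy input on which all functions in $\mathcal{F}$ incur equal loss. At each $I_j$ I would calibrate the label/loss so that the optimal predictor is $g_\sigma$ and so that any output landing on the \emph{wrong} side of the witness $r_j$ pays an excess loss of at least $2\gamma$ there (a hedge at $r_j$ still costs $\ge \gamma$). Consequently the excess risk of any candidate $\hat f$ is at least $\frac{\beta}{d}$ times $2\gamma$ times the number of inputs where $\hat f$ sits on the wrong side of the witness.

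Next I would run the averaging argument over a uniformly random $\sigma \in \{-1,+1\}^d$. Since the coordinates of $\sigma$ are independent and the informative region carries only mass $\beta$, a sample of size $m$ observes in expectation at most $m\beta$ of the inputs $I_1,\dots,I_d$; on every \emph{unobserved} input the posterior over $\sigma_j$ is still uniform, so by symmetry the learner's output lies on the wrong side of $r_j$ with probability exactly $\tfrac12$. Taking expectations over both $\sigma$ and the draw yields an expected excess risk of at least roughly $\beta\gamma\,(1 - m\beta/d)$. Choosing $\beta$ a suitable constant multiple of $\alpha$ makes $\beta\gamma$ a constant multiple of $\epsilon$, so that whenever $m < \frac{d-1}{16\alpha}$ this expected excess risk stays above $\epsilon$; an averaging step then singles out one $\sigma^\star$ for which the learner fails by more than $\epsilon$ with probability exceeding $\delta$. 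Note this is exactly where the $1/\alpha$ factor originates: the same $\alpha$ both enlarges the margin scale $\gamma=\epsilon/\alpha$ (shrinking $d$) and thins the informative mass $\beta\asymp\alpha$, so the rare informative points demand $m \gtrsim d/\alpha$ samples to be resolved.

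The hard part will be the final passage from this in-expectation lower bound on the excess risk to the high-probability failure that the definition of $m_L(\epsilon,\delta)$ actually requires, while pinning down the precise constant $16$ and the additive $-1$. This needs a concentration bound guaranteeing that the number of distinct informative inputs seen does not exceed its mean by much, so that for a random, hence some fixed, $\sigma$ the learner errs with probability above $\delta$; the hypotheses $\alpha < \tfrac14$ and $\delta < 10^{-2}$ are precisely the slack that absorbs these constants. A secondary but genuine technical point is the loss calibration of the second step: fat-shattering only guarantees that $g_\sigma$ falls on the correct side of $r_j$ with margin $\gamma$, not that it attains a prescribed value, so I would realize the labels through a two-point (Bernoulli) conditional with bias $\gamma$ about the witness under the $\ell_1$ loss, which converts that one-sided margin guarantee into the clean two-sided excess-loss gap used throughout.
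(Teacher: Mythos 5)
First, note that the paper does not prove this statement at all: it is imported verbatim as \cite[Theorem 19.5]{anthony2009neural}, so there is no in-paper proof to compare against. Judged on its own terms, your sketch reproduces the skeleton of the standard (and the cited source's) argument: a family of hard distributions indexed by sign patterns on a $\gamma$-fat-shattered set with $\gamma=\epsilon/\alpha$, informative mass of order $\alpha$ so that each unresolved bit costs excess risk of order $\alpha\gamma/d=\epsilon/d$, an averaging argument over a uniform random $\sigma$, and a final passage from expectation to the $(\epsilon,\delta)$ failure event. That is the right mechanism, and you correctly locate where the factor $1/\alpha$ comes from.

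Two steps, however, would fail as written. First, the ``fixed dummy input on which all functions in $\mathcal{F}$ incur equal loss'' need not exist for an arbitrary class; the standard fix is to park the bulk mass $1-\beta$ on one of the shattered points $I_1$ itself and randomize only $\sigma_2,\dots,\sigma_d$, which is exactly where the additive $-1$ in the numerator comes from --- your sketch treats the $-1$ as a constant to be ``pinned down'' rather than as a structural consequence. Second, and more seriously, your loss calibration compares the learner to the Bayes predictor, but the quantity $m_L(\epsilon,\delta)$ controls the excess over $\min_{f\in\mathcal{F}}\mathbb{E}[\,\cdot\,]$. Fat-shattering only guarantees that $g_\sigma(I_j)$ lies on the correct side of $r_j$ with margin $\gamma$, not that it is within $O(\gamma)$ of the label you plant there, so with a Bernoulli conditional biased by $\gamma$ about $r_j$ the in-class optimum may already incur a large loss at $I_j$ and the claimed $2\gamma$ excess-risk gap between ``right side'' and ``wrong side'' does not follow without an additional quantization/thresholding reduction to a binary class (this is how the cited proof actually proceeds). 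You flag this as the ``hard part,'' but as stated the Bernoulli-about-the-witness construction does not yield the required two-sided gap, so the proof is incomplete precisely at its load-bearing inequality.
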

Thus, any learning algorithm will have to use at least $\frac{\fat_{\mathcal{F}}(\frac{\epsilon}{\alpha})  - 1 }{16 \alpha} $ samples to guarantee that the average solution at test time, independently of the distribution, will be at most at $\epsilon$ distance from the best solution of the function class, with probability $1-\delta$. Note that the lower-bound is rigorously valid only when $\delta < \frac{1}{100}$ (and the bound becomes  independent of $\delta$ in that regime). 

\begin{remark}
    Sample complexity results are usually presented in the supervised setting and depend on the choice of the loss function, otherwise bounds have to be slightly modified \cite{anthony2009neural}. In order to avoid this discussion in the branch-and-cut framework, we can use the following trick for theoretical purposes: we choose all labels to be equal to $0$ and use a score function only taking values in $[0,1]$, in order to consider the equivalent problem of  minimizing the average square loss of the score, instead of the score itself.
\end{remark}



\section{Statement of results}
\label{sec:theory}

For any positive integer $ d \in \Z_+ $,   $ [d] $ refers to  the set $ \{1, 2, \ldots, d\} $ . 
The sign function  $\sgn: \R \rightarrow \{0,1\}$, is defined such that for any $x \in \R$, $\sgn(x) = 0$ if $x < 0$, and $1$ otherwise. This function is applied to each entry individually when applied to a vector. 
The elementwise floor function $\lfloor \cdot \rfloor$ is used to indicate  the rounding down of each component of a vector to the nearest integer.


\subsection{Over the pool of all CG-cuts}\label{subsection:results1}

We first present results in the case where the generation of CG-cuts is unrestricted, i.e., except the limitations brought by the generation process using the concept class, the whole pool of CG-cuts is considered. We assume the following structure on the underlying concept class $\mathcal{F}$ used to generate the CG-cuts: each function of the concept class incorporates an encoder function to transform each ILP to be processed further. For Neural networks, an example of such an encoder is the concatenation of all the instance's numerical data into a single vector. In the case of Graph Neural Networks, one can choose graph based representation (cf. for example  \cite{chen2024rethinking}). For ease of presentation, we will in the following suppose that we are working with the stacking encoder, and functions of $\mathcal{F}$ have domain $\R^{n \times m + m + n}$ and codomain $ \R^{m}$ where $n$ is the number of variables of the ILP, and $m$ its number of constraints. Our Assumptions described can be generalized to other concept classes  including the \hypertarget{assumption1}{GNN ones}.

\begin{enumerate}
\item[] \textbf{Assumption 1.}  $\mathcal{F} $ is a non empty concept class closed under translation of the input, i.e., for every $ \mu \in \R^{n \times m +m +n}$, $f \in \mathcal{F} \implies x \mapsto f ( x + \mu) \in \mathcal{F}$, and under scaling of the output of every coordinate, i.e., for every real $\lambda$ and $i \in [m]$, and $f =(f_1, \cdots, f_m) \in \mathcal{F}$ implies that $(f_1, \cdots, \lambda f_i, \cdots, f_m) \in \mathcal{F}$. Note that is true for (graph) neural networks (for any activation function that is not identically zero).
\item[] \textbf{Assumption 2.} (Same shattering power by restriction to some row).  Let $r = m \times n + m + n$. 
For every $i \in [m]$ representing the index of the associated CG-weight,  $c \mapsto \VCdim(\mathcal{F}_i[n](c)) $ is constant (cf. Definition \ref{def:restriction}, here $\mathcal{F}_i$ refers to the concept class formed by the $i-$ coordinate of $f \in \mathcal{F}$). This is for example true for (graph) Neural Networks with any activation function\footnote{This can be seen by adjusting the bias of the Neurons in the first layer.}. In those conditions, we refer to this constant as $\VCdim(\mathcal{F}_n)$.
\end{enumerate}





\begin{definition}
    Let $s: \mathcal{I} \times [0,1]^{m} \rightarrow \R$ be a score function, mapping each pair formed by an ILP instance and a weight vector of a CG cut to a real value.
Let $\mathcal{F}$ be a concept class following assumptions described above. Let $\sigma' : \R^{m} \rightarrow [0,1]^m$  be a \emph{squeezing function} so that   $ \sigma' \circ f $ (where $f \in \mathcal{F}$)  returns a vector in $[0-1]^{m}$ used as weights of the CG-cuts. We also suppose that $\sigma'$ verifies $\sigma'((-\infty, 0)) \subset [0, \frac{1}{2})$, $\sigma'([0, +\infty)) \subset [\frac{1}{2}, 1]$ and $(0,1) \subset \sigma'(\R)$. Let  $\mathcal{F}_{\sigma'}$ be the concept class obtained.  We define  $\mathcal{F}_{s,\sigma'}$ as the final resulting concept class $$\mathcal{F}_{s,\sigma'} := \{ I \mapsto s(I, h(I)) : h \in \mathcal{F}_{\sigma'} \}$$
\end{definition}

\begin{theorem}\label{theorem:lowerboundtransfer}
Under those assumptions, for both gap-closed and branch-and-cut tree size scores, the sample complexity of learning CG-cuts via the class $\mathcal{F}_{s, \sigma'}$ verifies 
$$      m_L(\epsilon, \delta) = \Omega ( \frac{ \VCdim( \mathcal{F}[n])  }{  \epsilon} )  $$
where $\VCdim( \mathcal{F}[n]) := \max_{i\in[m]} \VCdim( \mathcal{F}_i[n]) $.
\end{theorem}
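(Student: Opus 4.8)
The plan is to reduce the learning-to-cut problem to a generic real-valued learning problem over the restricted class $\mathcal{F}[n]$, and then invoke the fat-shattering lower bound of Theorem \ref{theorem:lowerbound:samplecomplexity}. The governing idea is that if the restricted class $\mathcal{F}[n]$ can shatter $t = \VCdim(\mathcal{F}[n])$ instances in the classification sense, then by exploiting the squeezing function $\sigma'$ and the score function $s$, we can manufacture a family of instances and associated real-valued targets that $\mathcal{F}_{s,\sigma'}$ fat-shatters with a \emph{constant} precision $\gamma$ (independent of $t$). Feeding $\fat_{\mathcal{F}_{s,\sigma'}}(\gamma) = \Omega(\VCdim(\mathcal{F}[n]))$ into Theorem \ref{theorem:lowerbound:samplecomplexity} with $\alpha$ chosen so that $\epsilon/\alpha = \gamma$ then yields $m_L(\epsilon,\delta) = \Omega(\VCdim(\mathcal{F}[n])/\epsilon)$, which is the claimed bound.

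First I would fix the coordinate $i^\star = \argmax_{i} \VCdim(\mathcal{F}_i[n])$ achieving the maximum and set $t := \VCdim(\mathcal{F}_{i^\star}[n])$. By the definition of VC-dimension of a real-output class and the $\sgn$ convention, there exist $t$ inputs that $\sgn(\mathcal{F}_{i^\star}[n])$ shatters: for every sign pattern $y \in \{0,1\}^t$ there is an $f^{(y)} \in \mathcal{F}$ whose $i^\star$-th coordinate realizes that pattern on the shattered inputs. I would then embed these $t$ shattered points as $t$ ILP instances $I_1,\dots,I_t$ (each carrying the requisite $A,\b,\c$ data in the first $n$ free coordinates, with the remaining coordinates frozen to the constant $c$ coming from the restriction $\mathcal{F}_{i^\star}[n]$), and design each instance so that exactly the $i^\star$-th CG-weight controls the score in a controlled, two-valued way. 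Here Assumption 1 (closure under output scaling and input translation) is what lets me push the $i^\star$-th coordinate's output arbitrarily far positive or negative without leaving $\mathcal{F}$, so that after applying $\sigma'$ — whose sign-preserving sandwich property $\sigma'((-\infty,0))\subset[0,\tfrac12)$, $\sigma'([0,+\infty))\subset[\tfrac12,1]$ and surjectivity onto $(0,1)$ guarantee — the weight can be driven to produce two well-separated score values. Assumption 2 is what makes the construction uniform across the choice of frozen tail $c$, so that the shattering witnessed in $\mathcal{F}_{i^\star}[n]$ transfers intact.

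The \textbf{main obstacle}, and the step deserving the most care, is the gadget that converts a sign pattern on the single coordinate $i^\star$ into genuinely fat-shattered \emph{score} values that hold \emph{simultaneously} for \emph{both} the gap-closed and the B\&C tree-size scores. Concretely, for each $I_j$ I must exhibit two CG-cuts — one whose $i^\star$-weight is ``large'' and one whose weight is ``small'' — such that the chosen score $s(I_j,\cdot)$ differs by at least $2\gamma$ between the two, while the other coordinates of the weight vector are held fixed (or neutralized) so they do not interfere. For the gap-closed score the separation is relatively transparent: by engineering $I_j$ so that the $i^\star$-th CG inequality either does or does not cut off the LP optimum, I get a bounded-below jump in closed gap. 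For the tree-size score the argument is more delicate, since the mapping from a single weight to tree size is discontinuous and instance-dependent; I would build each $I_j$ as a small, explicitly solvable ILP (in the spirit of the $5x_1+8x_2$ example in Section \ref{section:branchandcut}) whose branch-and-cut tree has a provably different node count according to whether the cut is added, and then verify the gap is at least a fixed constant. Establishing that one and the same instance family achieves constant-$\gamma$ separation for both scores is the crux; once this is in hand, the fat-shattering dimension lower bound and the arithmetic substitution into Theorem \ref{theorem:lowerbound:samplecomplexity} are routine.
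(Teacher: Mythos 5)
Your proposal follows essentially the same route as the paper: both reduce to a fat-shattering lower bound for $\mathcal{F}_{s,\sigma'}$ at a constant precision $\gamma$ via a transfer lemma that embeds the $\VCdim(\mathcal{F}[n])$ shattered points as redundant constraint rows of small, explicitly solvable ILPs, uses the scaling/translation closure of Assumption 1 together with the sign-sandwich property of $\sigma'$ to make one output coordinate's weight cross the $\tfrac{1}{2}$ threshold and thereby select one of two CG cuts whose gap-closed and tree-size scores are separated by a constant, and then substitutes into Theorem \ref{theorem:lowerbound:samplecomplexity}. The one piece you describe but do not actually build --- the concrete two-cut gadget achieving simultaneous separation for both scores --- is precisely the content of the paper's Lemma \ref{transferLemma} (a small polytope where one cut immediately yields the integral optimum, hence tree size $1$ and gap closed $\Omega(\gamma)$, while the other cut is redundant, giving tree size at least $3$ and gap closed $0$), so your plan is realizable exactly as stated.
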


According to the notion of  learnability, Theorem \ref{theorem:lowerboundtransfer} provides a lower bound on the minimum number of samples required to guarantee with probability $1-\delta$ that for any distribution $\mathcal{D}$, the solution of \emph{any learning algorithm} (in particular, this is true for the Empirical Risk Minimizer (ERM) algorithm) returns a solution whose predictions are at most $\epsilon$ far from the optimal neural network with high probability over the entire distribution.  

\begin{corollary}\label{corollary:learningcomplexity}
    For any concept classes verifying Assumptions \hyperlink{assumption1}{1 and 2}, 
    $ m_{L}(\epsilon, \delta) $ is bounded from below by the sample complexity of learning from $\mathcal{F}_n$ to a generic target function. In particular, for every $\gamma > 0$, we have with the same notations as Theorem \ref{theorem:lowerboundtransfer},
    $$ m_{L}(\epsilon, \delta)  = \Omega \left( \frac{\fat_{\mathcal{F}[n]}(\gamma)}{\epsilon}\right) = \Omega \left( \frac{\VCdim(\mathcal{F}[n])}{\epsilon}\right)$$
    where similarly $\fat_{\mathcal{F}[n]}(\gamma) := \max_{i\in[m]} \fat_{\mathcal{F}_i[n]}(\gamma) $.
\end{corollary}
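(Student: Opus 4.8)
The plan is to read the corollary as two linked claims: first, that the cut-learning sample complexity $m_L(\epsilon,\delta)$ dominates the sample complexity of generically learning the restricted coordinate class $\mathcal{F}[n]$ (square loss, arbitrary target); and second, that this generic complexity is quantified by both the fat-shattering dimension and the VC-dimension of $\mathcal{F}[n]$, which coincide here because of the output-scaling closure in Assumption 1. I would therefore prove it in three moves: (i) invoke the reduction underlying Theorem \ref{theorem:lowerboundtransfer} to transfer a lower bound from generic learning of $\mathcal{F}[n]$ to $m_L$; (ii) apply the general lower bound Theorem \ref{theorem:lowerbound:samplecomplexity} to $\mathcal{F}[n]$ to produce the fat-shattering form; and (iii) use scaling closure to relate $\fat_{\mathcal{F}[n]}(\gamma)$ to $\VCdim(\mathcal{F}[n])$, so that the two $\Omega$-expressions agree and match Theorem \ref{theorem:lowerboundtransfer}.

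For the quantitative bound, I would apply Theorem \ref{theorem:lowerbound:samplecomplexity} to the class $\mathcal{F}[n]$ with the substitution $\alpha = \epsilon/\gamma$, which turns $\fat_{\mathcal{F}[n]}(\epsilon/\alpha)$ into $\fat_{\mathcal{F}[n]}(\gamma)$ and yields $m_L \geq \frac{\gamma}{16}\cdot\frac{\fat_{\mathcal{F}[n]}(\gamma)-1}{\epsilon} = \Omega(\fat_{\mathcal{F}[n]}(\gamma)/\epsilon)$, valid in the regime $\epsilon < \gamma/4$ that keeps $\alpha < 1/4$ (with $\gamma$ treated as a fixed constant absorbed into the hidden constant). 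Taking the maximum over the coordinate index $i \in [m]$ is what produces $\fat_{\mathcal{F}[n]}(\gamma) = \max_i \fat_{\mathcal{F}_i[n]}(\gamma)$, consistent with the max-definition in the statement. The transfer to $m_L$ itself is inherited from the reduction in Theorem \ref{theorem:lowerboundtransfer}, whose construction already embeds generic learning of a single coordinate $\mathcal{F}_i[n]$ into the cut-learning problem through the squeezing map $\sigma'$ and the score $s$.

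The bridge to the VC-dimension is the scaling lemma: for every $i\in[m]$ and every $\gamma > 0$, $\fat_{\mathcal{F}_i[n]}(\gamma) \geq \VCdim(\mathcal{F}_i[n])$. I would prove it by taking a set VC-shattered by $\sgn(\mathcal{F}_i[n])$, choosing the witness vector $r = 0$, and for each labeling in $\{-1,1\}$ selecting a shattering function $f$ (so $f$ is strictly positive or negative at the corresponding points) and rescaling its output by a large $\lambda$; Assumption 1 guarantees $\lambda f \in \mathcal{F}_i[n]$, and a finite common $\lambda$ makes every value exceed the margin $\gamma$, so the same set is fat-shattered at scale $\gamma$. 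Taking the maximum over $i$ gives $\fat_{\mathcal{F}[n]}(\gamma) \geq \VCdim(\mathcal{F}[n])$. Combined with Theorem \ref{theorem:lowerboundtransfer}, which already delivers $m_L = \Omega(\VCdim(\mathcal{F}[n])/\epsilon)$, this shows both displayed $\Omega$-bounds are valid, the fat-shattering form being the sharper, scale-parametrized version and the VC form its witness-$0$ specialization.

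The main obstacle is the precise sense of the equality between the two $\Omega$-expressions. Scaling closure gives cleanly that fat-shattering at any scale is at least VC-shattering (witness $0$), but the reverse inequality $\fat_{\mathcal{F}_i[n]}(\gamma) \leq \VCdim(\mathcal{F}_i[n])$ is not automatic, since fat-shattering permits arbitrary nonzero witnesses whereas $\VCdim$ thresholds at $0$; the honest statement is $\fat_{\mathcal{F}[n]}(\gamma) \geq \VCdim(\mathcal{F}[n])$, and the equality should be read as identifying the two bounds at the witness-$0$ regime that scaling renders scale-free. I would therefore make explicit that the rigorous content is (a) the fat-shattering lower bound of the previous paragraph and (b) its reduction, via scaling, to the VC-dimension lower bound of Theorem \ref{theorem:lowerboundtransfer}, rather than a general coincidence of fat-shattering and VC-dimension. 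A secondary point to check is that the substitution $\alpha = \epsilon/\gamma$ stays inside the admissible range $0 < \alpha < 1/4$ together with $0 < \delta < 10^{-2}$ required by Theorem \ref{theorem:lowerbound:samplecomplexity}.
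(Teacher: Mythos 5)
Your high-level decomposition --- combine Theorem \ref{theorem:lowerboundtransfer} with a comparison between $\fat_{\mathcal{F}[n]}(\gamma)$ and $\VCdim(\mathcal{F}[n])$ --- matches the paper's, but you run the comparison in the wrong direction, and this breaks the logic. The paper's proof is essentially one line: by \cite[Theorem 11.13]{anthony2009neural} one has $\Pdim(\mathcal{F}_i[n]) \geq \fat_{\mathcal{F}_i[n]}(\gamma)$ for every $\gamma>0$, and the closure assumptions are invoked to get $\VCdim(\mathcal{F}_i[n]) \geq \Pdim(\mathcal{F}_i[n])$; hence $\VCdim(\mathcal{F}[n]) \geq \fat_{\mathcal{F}[n]}(\gamma)$, and the fat-shattering form of the bound is the \emph{weaker} consequence of the $\Omega(\VCdim(\mathcal{F}[n])/\epsilon)$ bound already delivered by Theorem \ref{theorem:lowerboundtransfer}. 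You instead prove $\fat_{\mathcal{F}_i[n]}(\gamma) \geq \VCdim(\mathcal{F}_i[n])$ by output scaling, which makes the fat-shattering bound the \emph{stronger} of the two statements --- and a stronger statement cannot be read off from the theorem; you yourself note that the reverse inequality ``is not automatic,'' which is precisely the inequality you would need. (Your scaling argument also has a boundary defect: with the paper's convention $\sgn(0)=1$, a shattering function may satisfy $f(I_i)=0$ exactly, and no rescaling $\lambda f$ clears the margin $\gamma$ above the witness $0$.)

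The attempt to establish the fat-shattering form directly --- applying Theorem \ref{theorem:lowerbound:samplecomplexity} to $\mathcal{F}[n]$ with $\alpha=\epsilon/\gamma$ --- does not close this gap, because that application lower-bounds the sample complexity of learning the coordinate class $\mathcal{F}[n]$, which is a different learning problem from the one defining $m_L(\epsilon,\delta)$ (learning the cut class $\mathcal{F}_{s,\sigma'}$). The bridge between the two problems is Lemma \ref{transferLemma}, and that lemma only transfers \emph{sign}-shattering of $\mathcal{F}[n]$ into fat-shattering of $\mathcal{F}_{s,\sigma'}$: the margin of the resulting shattering comes from the score gap of the constructed ILP instances, while the squeezing map $\sigma'$ retains only the sign of the network output, so any margin structure of $\mathcal{F}[n]$ at scale $\gamma$ is destroyed and cannot be inherited by $m_L$. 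The repair is exactly the paper's route: keep Theorem \ref{theorem:lowerboundtransfer} as the sole quantitative input and deduce the $\fat_{\mathcal{F}[n]}(\gamma)$ form from the chain $\VCdim(\mathcal{F}_i[n]) \geq \Pdim(\mathcal{F}_i[n]) \geq \fat_{\mathcal{F}_i[n]}(\gamma)$, taking the maximum over $i\in[m]$.
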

 Corollary \ref{corollary:learningcomplexity}  applies in particular to neural networks (and to graph neural networks as well), up to adding an extra neuron on each layer\footnote{There is no asymptotic difference  between Pseudo-dimension and VC-dimension of real output neural networks, up to adding one layer or one neuron per layer.}.

We now compare to the known upper-bound in the case of Neural Networks (i.e., when $\mathcal{F}$ is composed of Neural Networks of a certain depth and width). The upper bound of the pseudo-dimension of this concept class given by \cite[Proposition 3.3]{cheng2024data} is $ \mathcal{O}\left( LW \log (U+m) + W \log  M  \right)$ for ReLU neural networks and a squeezing function to constrain their outputs in $[0,1]$, $M $ is an upperbound on the coefficients in $A$ and $b$,  where $U$ is the \textit{size} of the neural network, defined as $w_1 + \cdots + w_W$, and  are also imposed the conditions that $\sum_{i=1}^m \sum_{j=1}^n |A_{ij}| \leq a$ and $\sum_{i=1}^m |b_i|  \leq b$ for any $(A,\b,\c) \in \I$, and $M := 2(a + b + n)$. 

Hence, ignoring logarithmic factors in $\frac{1}{\delta}$ and $\frac{1}{\epsilon}$, the best known upper-bounds for $m(\epsilon, \delta)$ is given by $\O\left( \frac{1}{\epsilon^2}\left(  LW \log (U+m) + W \log M \right) \right) $, for the BC tree size score. Since the result only use the invariance by the number of regions where the CG-cuts remain constants, their proof  can  adapted for the gap closed score, although we suspect that a better upper bound should be achievable in that case.

 We now state our lower-bound in the case of Neural Networks in the next proposition. Note that our lower-bound does not use any amplitude on the input data of the problem.

\begin{proposition}\label{lowerbound:neuralnetworks}
   Suppose $\mathcal{F}$ is composed of ReLU neural networks with $\leq L$ layers, and $ \leq W $ weights,  with the concatenation encoder $I \in \mathcal{I} \mapsto (A, b, c) \in \R^{n \times m + m + n}$. There is a universal constant $C$ such that the following holds. Suppose $W > CL  > C^2$   Consider both gap-closed and branch-and-cut tree size scores. Then, the sample complexity of learning CG-cuts via the class $\mathcal{F}_{s, \sigma'}$ verifies 
$$    m_L(\epsilon, \delta) \geq \frac{1}{  \epsilon C}  \overline{W}   L \log\left(\frac{\overline{W}}{L}\right)  
$$   
where $\overline{W} := W - w_1(n+1)m  $.
\end{proposition}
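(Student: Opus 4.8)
The plan is to reduce to the already-established Corollary~\ref{corollary:learningcomplexity}, which lower bounds $m_L(\epsilon,\delta)$ by $\Omega(\VCdim(\mathcal{F}[n])/\epsilon)$, and then to lower bound $\VCdim(\mathcal{F}[n]) = \max_{i\in[m]}\VCdim(\mathcal{F}_i[n])$ by exhibiting, inside the restricted class $\mathcal{F}_i[n]$, a family of ReLU networks rich enough to trigger the classical near-tight VC-dimension lower bound for piecewise-linear networks (Bartlett, Harvey, Liaw, Mehrabian). All the reduction from the score and squeezing function $\sigma'$ down to $\VCdim(\mathcal{F}[n])$ is inherited from Theorem~\ref{theorem:lowerboundtransfer} and its corollary, so the only genuinely new work is estimating $\VCdim(\mathcal{F}_i[n])$.

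First I would identify the restricted class as a genuine ReLU network class on $n$ inputs. Fix an output coordinate $i$ and fix the last $d-n = nm+m$ input coordinates of the concatenation encoder to arbitrary constants $c$. For any $f \in \mathcal{F}$, the first hidden layer computes $\sigma(W_1^{(x)} x + W_1^{(c)} c + b_1)$; absorbing $W_1^{(c)} c$ into the bias (legitimate by Assumption~1, closure under translation) shows that $x \mapsto f_i(x,c)$ is itself a ReLU network on the $n$ free inputs, with $\leq L$ layers and with the $w_1(nm+m) = w_1 m(n+1)$ first-layer weights that touched the fixed coordinates removed. Hence $\mathcal{F}_i[n]$ consists of scalar ReLU networks on $\R^n$ with at most $\overline{W} = W - w_1 m(n+1)$ weights and $\leq L$ layers; by Assumption~2 its VC-dimension does not depend on the fixed block $c$, so the bound we obtain is uniform.

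Next I would invoke the lower-bound construction for ReLU networks, which produces, for any architecture with $\overline{W}$ weights and $L$ layers in the regime $\overline{W} > c_0 L$, a set of $\Omega(\overline{W}L\log(\overline{W}/L))$ points shattered by the sign of the scalar output. Since $\mathcal{F}$ contains all ReLU networks of the prescribed size, $\mathcal{F}_i[n]$ contains the networks used in that construction (devoting the whole network to output $i$ via the output scaling of Assumption~1, and choosing a thin first layer so that the bulk of the $\overline{W}$ weights sits in the deeper layers, where the shattering power resides, and the input dimension $n$ suffices to host the shattered points). The hypothesis $W > CL > C^2$ guarantees we are in the admissible regime and that $\overline{W}$ remains proportional to $L$, yielding $\VCdim(\mathcal{F}[n]) \geq c\,\overline{W}L\log(\overline{W}/L)$ for a universal constant $c$. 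Chaining with the corollary gives $m_L(\epsilon,\delta) = \Omega(\VCdim(\mathcal{F}[n])/\epsilon) \geq \tfrac{1}{\epsilon C}\,\overline{W}L\log(\overline{W}/L)$ after renaming constants, which is the claim.

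The main obstacle is the third step: I must ensure the off-the-shelf VC lower-bound network can actually be realized inside $\mathcal{F}_i[n]$ \emph{after} the restriction — concretely, that its first layer is thin enough that removing the $w_1 m(n+1)$ fixed-input weights does not erode the $\overline{W}L\log(\overline{W}/L)$ count, and that the construction's input dimension is at most $n$. Verifying this compatibility (rather than the VC lower bound itself, which is quoted as a black box) is where the care lies, together with checking that the conversion of the hypothesis $W > CL > C^2$ into the effective regime condition $\overline{W} > c_0 L$ is consistent with a thin first layer.
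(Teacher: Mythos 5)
Your proposal is correct and follows essentially the same route as the paper, whose proof of this proposition is literally a one-line appeal to Theorem~\ref{theorem:lowerboundtransfer} combined with the VC-dimension lower bound of Bartlett, Harvey, Liaw and Mehrabian. In fact you supply more detail than the paper does — in particular the accounting for the $w_1(n+1)m$ first-layer weights lost under the restriction to $\mathcal{F}_i[n]$ and the compatibility of the shattering construction with input dimension $n$ — which the paper leaves implicit.
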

A few comments are in order:
\begin{itemize}
\item The correction term of $w_1(n+1)m$, where $w_1$ is the number of neurons in the first layer, accounts for the restriction of the concept class to $n$ inputs, $\mathcal{F}[n]$. Our approach ``ignores''  $n\times m +m = (n+1)m$ inputs. This leads to an amount of $w_1(n+1)m$ weights that are being removed in the neural network. 
    \item Recall that $W = \sum_{i=1}^{L}w_{i-1} w_i$
where $w_0:= n \times m + m  + n$ is the input dimension, and  the other $w_i$'s are the widths (number of neurons) of the Neural network considered on each layer. In particular $\overline{W} = W - w_1(n+1)m$ is always positive, and furthermore the ratio $\frac{\overline{W}}{W}  
$ is greater than $  1 - \frac{w_1w_0}{1 + W} \geq 1 - \frac{W}{ 1+W}$. 

\item The lower bound suppose some structure on the layers and parameters given by $W > CL $. This loss of generality does not take place in our proof technique, but in the bit-extration technique to lower-bound the $\VCdim$ of the class of neural networks \cite{bartlett2019nearly}. Therefore, in order to remove that assumption, one would have the either to obtain lower-bound that do not require that structure, or adopt an entirely different approach, specific to shattering ILPs,  that would not require a general VC dimension lower bound on Neural Networks.
\end{itemize}

 Hence, supposing a regime where the number of weights in the Neural Network are large compared to the variables $n$ and number of constraints $m$,  the  gap of is of order $\frac{1}{\epsilon}$, between our lower-bound and the best upper bound, ignoring logarithmic factors in $\frac{1}{\delta}$ and $\frac{1}{\epsilon}$. In a general learning framework, this gap is inevitable: see for instances discussions in \cite[Section 19.5]{anthony2009neural}. We suspect that this gap transfers for learning CG cuts, if  no further assumption is made on the distribution of instances.

\subsection{Over the pool of all CG-cuts from the tableau}

We now restrict to the pool of CG-cuts obtained from the tableau, so the concept class has to be changed slightly. We show that despite our restriction, the sample complexity is still driven by the VC-dimension of the underlying concept class.


To make this formal, we suppose the following structure: each function of the concept class is decomposable as the composition of 
\begin{itemize}
    \item a  function that  takes as input an ILP instance $I\in \mathcal{I}$ and returns the $m$ CG-cuts from the tableau  $(a_1,b_1), \cdots, (a_m, b_m)$. This can be perfomed using the the simplex algorithm.
\item Each function $g \in \mathcal{G}$ maps $(I, a_i,b_i)$ to a real value. The cut selected to be added to the instance is the one maximizing each of the $m$ scores, the concept class after selecting the maximum is $\tilde{G}$ (ties are broken by alphabetical order of the constraints). 
\end{itemize}




\begin{definition}
    Let $s: \mathcal{I} \times [0,1]^{m} \rightarrow \R$ be a score function, mapping each pair formed by an ILP instance and a weight vector of a CG cut to a real value.
Let $\mathcal{G}$ be a concept class described above such that Assumptions \hyperlink{assumption1}{1 and 2} hold.
We define $\mathcal{G}_{s}$ as the final resulting concept class $$\mathcal{G}_{s} := \{ I \mapsto s(I, g(I)) : g \in \tilde{\mathcal{G}} \}$$
\end{definition}

\begin{theorem}\label{theorem:lowerboundtransfertableau}
Under those conditions, for both gap-closed and branch-and-cut tree size scores, the sample complexity of learning CG-cuts via the class $\mathcal{G}_{s, \sigma'}$ verifies 
$$      m_L(\epsilon, \delta) = \Omega ( \frac{\VCdim(\mathcal{G}[n])  }{  \epsilon} )  $$
\end{theorem}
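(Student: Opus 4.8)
The plan is to mirror the argument behind Theorem~\ref{theorem:lowerboundtransfer}: I would reduce the claim to a fat-shattering lower bound for the score-composed class $\mathcal{G}_{s}$ and then apply Theorem~\ref{theorem:lowerbound:samplecomplexity}. It suffices to produce a universal constant $\gamma$, independent of $d := \VCdim(\mathcal{G}[n])$, together with $d$ ILP instances that $\mathcal{G}_{s}$ fat-shatters with precision $\gamma$. Plugging $\fat_{\mathcal{G}_{s}}(\gamma) \ge d$ into Theorem~\ref{theorem:lowerbound:samplecomplexity} with $\alpha = \epsilon/\gamma$ (admissible once $\epsilon < \gamma/4$) then yields $m_L(\epsilon,\delta) \ge (d-1)\gamma/(16\epsilon) = \Omega(\VCdim(\mathcal{G}[n])/\epsilon)$.

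First I would extract the shattering witnesses. By the definition of $\VCdim(\mathcal{G}[n])$ together with Assumption~2 (which makes the restricted VC-dimension independent of the values assigned to the held-fixed coordinates), there are $d$ inputs to the scoring map $g$ that are shattered via $\sgn(g)$ when all but the first $n$ coordinates are frozen. The decisive choice specific to the tableau setting is to take the $n$ free coordinates among those that do not enter the constraint data $(A,\b)$ -- concretely, the objective vector $c$: the Chv\'atal-Gomory cuts read off the tableau depend only on $(A,\b)$ and the optimal basis $B$ (through $B^{-1}A$ and $B^{-1}\b$), not on $c$, so varying these coordinates leaves the pool of tableau cuts $(a_1,b_1),\dots,(a_m,b_m)$ untouched while still moving the input seen by $g$. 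Using Assumption~1 I would translate and scale so that the $d$ witnesses sit inside a small ball contained in the optimality cone of one fixed non-degenerate basis $B$, which guarantees that all $d$ instances share the same tableau and hence the same candidate cuts.

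Next I would engineer the score separation and route the cut selection through the hard argmax defining $\tilde{\mathcal{G}}$. For the common basis $B$ I would arrange the instance so that the tableau exposes a distinguished ``good'' cut $C_+$ whose addition closes essentially all of the gap (and likewise shrinks the branch-and-cut tree) and a ``bad'' cut $C_-$ whose addition closes essentially none; the explicit instance of Section~\ref{section:branchandcut} shows such a separation is attainable for both scores, and since the witnesses stay in a small ball it remains separated by a constant margin, fixing $\gamma$. I would set the features so that the argmax compares the score of $C_+$ -- equal to $g$ evaluated at the varying witness input -- against a fixed reference value carried by $C_-$ and by every other tableau cut, so that $C_+$ is selected exactly when $\sgn(g) = 1$ at the witness. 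For a labeling $y \in \{-1,1\}^{d}$, shattering supplies a single $g$ realizing that sign pattern simultaneously on all $d$ witnesses; this $g$ selects $C_+$ on instance $j$ precisely when $y_j$ prescribes it, giving the high score there and the low score otherwise. Choosing each witness $r_j$ as the midpoint of the two admissible scores certifies $\gamma$-fat-shattering, i.e.\ $\fat_{\mathcal{G}_{s}}(\gamma) \ge d$.

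The main obstacle is exactly this coupling through the tableau. Unlike the unrestricted case, the cuts are not chosen freely but are dictated by the simplex basis, so I must certify simultaneously that the constructed instances admit one common optimal basis whose tableau genuinely realizes both $C_+$ and $C_-$ as valid Chv\'atal-Gomory cuts, and that none of the remaining $m-2$ tableau cuts can ever win the argmax for any $g \in \mathcal{G}$ -- otherwise an unintended cut could hijack the selection and break the clean dependence on $\sgn(g)$. I expect the bulk of the technical work, and the only genuine price of the tableau restriction relative to Theorem~\ref{theorem:lowerboundtransfer}, to lie in pinning the decoy cuts' features below the reference value uniformly over $\mathcal{G}$ and in verifying that the chosen basis stays optimal over the whole witness ball.
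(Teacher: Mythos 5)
Your high-level plan---reduce to a fat-shattering lower bound for the score-composed class via a fixed base instance whose two tableau cuts have a constant score separation, let the shattered coordinates steer the argmax between them, and finish with Theorem \ref{theorem:lowerbound:samplecomplexity}---is exactly the paper's strategy, and you correctly isolate the two obstacles specific to the tableau setting (keeping the candidate pool fixed across instances, and preventing decoy rows from winning the argmax). Where you diverge is in \emph{which} $n$ coordinates carry the shattering. The paper does not touch the objective: it takes the base instance of Lemma \ref{lemma:transfertableau} (the $\max\, 5x_1+8x_2$ example, whose two tableau cuts have scores at distance $>\frac{1}{5}$ for both measures), appends a \emph{redundant} constraint $a_i^{\T}\x\le 0$ with the $a_i$ translated out of the positive orthant, and lets the shattered vectors be that constraint row. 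Since the row is redundant, the feasible region, the optimal vertex, the two relevant tableau cuts, and both scores are literally identical across all $d$ instances; the only thing that varies is the input seen by $g$, which flips which cut maximizes the score. No optimality-cone, continuity, or stability argument is needed.

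Routing the shattering through $c$ instead creates three concrete problems. First, $\VCdim(\mathcal{G}[n])$ is defined via Definition \ref{def:restriction} and Assumption 2 as the VC dimension of the restriction to the \emph{first} $n$ coordinates of the stacked encoding, i.e., the first row of $A$, not the $c$-block; shattering over $c$ lower-bounds the VC dimension of a different restriction, which need not coincide with $\VCdim(\mathcal{G}[n])$ for a general class satisfying only Assumptions 1 and 2, so the stated inequality does not follow. Second, both scores depend on $c$: the gap closed is a ratio of objective differences and the branch-and-cut tree evolution (bounds, pruning) changes with $c$, so your margin $\gamma$ can no longer be read off a single fixed instance; you need uniform separation over the whole witness set, and for the tree-size score this is not a continuity argument. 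Third, Assumption 1 grants translation of the input and scaling of the \emph{output} only; there is no input scaling, so you cannot shrink the $d$ witnesses into a prescribed small ball---translation can place a bounded set inside a full-dimensional optimality cone, but the region it occupies grows with $d$, leaving your claim that $\gamma$ is a universal constant independent of $d$ unsupported. All three issues disappear if the shattered vectors are placed in a redundant row of $A$, as the paper does.
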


\begin{proposition}\label{theorem:lowerboundtransfertableau}
 Suppose $\mathcal{G}$ is composed of neural networks with $\leq L$ layers, and $ \leq W $ layers,  with the concatenation encoder $I \in \mathcal{I} \rightarrow (A, b, c) \in \R^{n \times m + m + n}$. There is a universal constant $C$ such that the following holds. Suppose $W > CL  > C^2$.  Then the sample complexity of learning CG-cuts via the class $\mathcal{F}_{s}$ from the optimal Tableau verifies  
$$    m_L(\epsilon, \delta) \geq \frac{1}{  \epsilon C}  \overline{W}   L \log\left(\frac{\overline{W}}{L}\right)  
$$   
where $\overline{W} := W - w_1 (n+1)(m+1)    $.
\end{proposition}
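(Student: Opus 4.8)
The plan is to mirror the proof of Proposition~\ref{lowerbound:neuralnetworks}, the only essential change being the book-keeping of the input dimension induced by appending the tableau data. First I would invoke the general tableau transfer theorem (Theorem~\ref{theorem:lowerboundtransfertableau}), which reduces the problem to producing a lower bound on $\VCdim(\mathcal{G}[n])$: once we know $m_L(\epsilon,\delta) = \Omega(\VCdim(\mathcal{G}[n])/\epsilon)$, it suffices to establish $\VCdim(\mathcal{G}[n]) = \Omega(\overline{W}\, L \log(\overline{W}/L))$ in the stated regime $W > CL > C^2$.

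Next I would identify the effective architecture obtained by the restriction $\mathcal{G}[n]$. Each network in $\mathcal{G}$ reads the concatenation $(A,b,c)\in\R^{nm+m+n}$ together with one tableau cut $(a_i,b_i)\in\R^{n+1}$, so the true input dimension is $w_0 = nm+m+n+(n+1) = nm + m + 2n + 1$. Restricting to the first $n$ coordinates therefore freezes $w_0 - n = nm + m + n + 1 = (n+1)(m+1)$ inputs; this is exactly the extra factor $(n+1)$ over the $(n+1)m$ of the unrestricted case, coming from the single cut $(a_i,b_i)$. By Assumption~2 the VC-dimension of the restriction does not depend on the frozen values, so I may fix them to $0$. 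Freezing these $(n+1)(m+1)$ coordinates removes, per first-layer neuron, the corresponding incoming weights; with $w_1$ neurons in the first layer this drops the weight count by $w_1(n+1)(m+1)$, leaving a fully-connected ReLU network on $n$ inputs with $\overline{W} = W - w_1(n+1)(m+1)$ weights and $\leq L$ layers. Conversely, any such $n$-input network embeds back into $\mathcal{G}[n]$ by zeroing the first-layer weights attached to the frozen coordinates, so $\mathcal{G}[n]$ contains this reduced network class and hence $\VCdim(\mathcal{G}[n])$ is at least its VC-dimension.

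Finally I would apply the near-tight VC lower bound for ReLU networks of \cite{bartlett2019nearly}: there is a universal constant $c$ such that an architecture with $\overline{W}$ weights and $L$ layers can be chosen with $\VCdim = \Omega(\overline{W}\, L \log(\overline{W}/L))$, valid whenever $\overline{W} > cL > c^2$. Combining with the transfer step yields the claimed bound; the hypothesis $W > CL > C^2$ with $C$ large enough guarantees $\overline{W} > cL > c^2$, using that $\overline{W}$ is a constant fraction of $W$ (as noted after Proposition~\ref{lowerbound:neuralnetworks}), which is all that is needed to legitimately invoke \cite{bartlett2019nearly}.

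I expect the main obstacle to be the architecture/width matching in this last step: the bit-extraction construction of \cite{bartlett2019nearly} realizes its VC lower bound only for particular widths and depths, so one must check that the reduced architecture (with $n$ inputs, $\overline{W}$ weights, $L$ layers) is compatible with that construction and that the regime condition transfers cleanly from $W > CL$ to $\overline{W} > cL$. This is precisely the only place where the structural hypothesis $W > CL > C^2$ is used, exactly as flagged after Proposition~\ref{lowerbound:neuralnetworks}; the transfer and restriction steps themselves carry over essentially verbatim from the unrestricted case, with $(n+1)m$ replaced throughout by $(n+1)(m+1)$.
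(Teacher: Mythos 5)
Your proposal matches the paper's proof, which is exactly the two-step argument you describe: invoke the tableau transfer theorem to reduce to lower-bounding $\VCdim(\mathcal{G}[n])$, then apply the VC-dimension lower bound of \cite[Theorem 3]{bartlett2019nearly} to the restricted architecture. Your accounting of the extra $(n+1)$ frozen inputs from the appended tableau cut $(a_i,b_i)$, yielding the correction $w_1(n+1)(m+1)$, is more explicit than the paper's one-line proof but is precisely the intended bookkeeping.
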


In comparison with the upper-bounds \cite[Corollary 2.8]{cheng2024data}, ignoring logarithmic factors in $\frac{1}{\delta}$ and $\frac{1}{\epsilon}$,  we have that $ m(\delta, \epsilon) = \O\left( \frac{WL\log(Um)}{\epsilon^2} \right)$.
where $U = w_1 + \cdots w_L$ is the total number of neurons.  Therefore, seen as a function of the parameters, supposing the regime where the number of weights in the Neural Network are large compared to the variables $n$ and number of constraints $m$, which implies $\overline{W} $ to be of the order of $W$, our bound could be improved by integrating   logarithmic factors in $m$ and $U$. 

\section{Numerical experiments}
\label{sec:numerical}


To start the sample complexity analysis computationally, we wish to investigate how both scores in the literature relate empirically, based on the premise that (i) the reduction in the B\&C tree size is ultimately the score of interest but is costly to obtain and learn, and (ii) the gap closed is easier to compute but less reliable as a training signal for the end-task of minimizing the B\&C tree size.

A potential trade-off emerges: cuts that close large gaps may not always reduce tree size due to some situations where both are incomparable (see end of Section \ref{section:branchandcut} ), or from branching decisions that change the impact of one cut overall. This setup mirrors classic proxy optimization challenges in machine learning, where we want to learn for a costly target (tree size), but we use a cheaper, noisier proxy (gap closed), hoping for performance generalization to the target.

Our computational methodology is based on the two key building blocks: (1) We represent each pair (ILP instance, cut) as a graph, i.e., we encode variables and constraints by a graph neural network (GNN), with proper edges  and features. GNNs naturally encode ILP instances well because the solution of an ILP does not depend on the order of the rows, which is captured by the isomorphism invariance of the associated representation. (2) At training time, we generate all the CG cuts from the optimal Simplex tableau with corresponding scores, for any considered ILP instance. The GNN is trained to match the scores returned for each CG cut via a cross-entropy loss. 

Having collected the (up to) $m$ CG cuts from the optimal Simplex tableau, and their corresponding scores $s_1, \dots, s_m$ (either gap closed or B\&C tree size reduction), for each of the $t$ instances, we approximate a solution of the problem
 \begin{equation}
    \min_{W} \frac{1}{t} \sum_{i=1}^{t} \ell\big(  (H_W ( E (I_i, o_i)) )_{j \in [m]}, (s_j)_{j \in [m]} \big), 
\end{equation}
where $E$ is the instance and cut encoder (described in the next subsection), $H_W$ is a GNN parametrized by the weights $W$, which takes as input a graph  and vectors $o_i$ of size $m+1$ representing the collected cut from the tableau (left-hand side and right-hand side), and $\ell$ is the cross entropy loss $\ell: \R^{m} \times \R^{m} \rightarrow \R, (x,y) \mapsto \ell(x,y):= \frac{1}{m}\sum_{k=1}^{m} y_k \log(\frac{x_k}{\sum_{l=1}^{l} x_l})$. At inference time, suppose the trained parameters is given by $W$. On a new instance $I$, the CG cut will be selected as $\argmax_{i \in [m]} H_W(I, o_i)$, and ties are broken in an uniformly random manner.

\subsection{Experimental setup}

\paragraph{Modeling ILP as GNN.} Each ILP instance augmented by a cut gets encoded by $E$ unambiguously as a weighted graph $G$ with a three dimensional feature vector on its vertices as follows: (i) The vertices of $G$ are split between the variables and constraint vertices. Each variable gets associated to a vertex, and each constraint as well, leading to a bipartite graph with $n + m$ vertices. Furthermore, each variable vertex receives a three-dimensional feature vector corresponding to the objective vector entry, plus the coefficient of the cut for that variable, as well as the right-hand side (same for all variables). The other vertices corresponding to constraints get the vector $(1,1,1)$ as feature (for dimensional homogeneity purposes). (ii) One edge is created between each variable vertex $i$ and constraint vertex $j$ provided the variable $i$ appears in constraint $j$. The associated edge has weight $a_{ij}$; the number of edges in the graph depends on the sparsity of $A$.

\paragraph{Data.} We consider the very well-known Set Cover and Uncapacitated Facility Location problems with their natural ILP formulations. 
The $1,000$ set cover instances have $50$ subsets and $30$ base elements.
The $1,000$ uncapacitated facility location instances have $10$ facilities and $10$ clients.   The details for randomly generating the instances are detailed in the supplementary material. 

\paragraph{Training.} The experiments were conducted on a Linux machine with a 24-core Intel Xeon Gold 6126 CPU, with 745Gb of RAM, and an NVIDIA Tesla V100-PCIE with 32GB of VRAM. We used Gurobi 12.0.1 \cite{Gurobi} to solve the ILPs, with default cuts, heuristics, and presolve settings turned off. The GNNs were implemented using PyTorch 2.6.0 and Pytorch Geometric 2.6.1.  The details of the implementation are detailed in the supplementary material. 

\subsection{Empirical results}

The GNN is trained using the B\&C tree size vs. gap closed as a proxy. The average tree size obtained on $250$ test instances for each problem after adding the chosen cut from the tableau is reported in Table \ref{tab:results}. The table compares four strategies: the perfect predictor (Optimal) always using the CG tableau cut that results in the smallest B\&C tree size, a classical heuristic selecting a cut according to its parallelism with respect to the objective function (Parallelism, see, e.g., \cite{lodi2009mixed}), a uniform random selection (Random), and the GNN using either the B\&C tree size or the gap closed in training. 
\captionsetup[table]{skip=2pt}
\begin{table}[h]
\footnotesize
\caption{Average tree size on 250 test instances of the GNN trained using either the B\&C tree size or gap closed as a proxy vs two classical benchmarks.}
  \label{tab:results}
  \centering
 \subfloat[Set cover]{\renewcommand{\arraystretch}{1.15} 
  \begin{tabular}{rrr}
    \toprule
    \cmidrule(r){1-3}
    Setting     &  B\&C tree & gap closed  \\ 
    \midrule
    Optimal & 4.95 & 4.95\\
    Parallelism  & 8.29 &  8.29\\ 
    Random  & 9.71 & 9.71\\
    GNN  &  8.27  &  8.65 \\ 
    \bottomrule
  \end{tabular}}
 \subfloat[Facility location]{\renewcommand{\arraystretch}{1.15} 
  \begin{tabular}{rrr}
    \toprule
    \cmidrule(r){1-3}
    Setting     &  B\&C tree & gap closed  \\ 
    \midrule
    Optimal & 86.31 & 86.31 \\
    Parallelism  & 144.09 &  144.09\\ 
    Random  & 152.46 & 152.46\\
    GNN  &  128.85  &  134.61 \\ 
    \bottomrule
  \end{tabular}}
\end{table}

The initial results in Table \ref{tab:results} show that the GNNs are able to learn and provide a solid improvement (facility location) or stay on pair (set cover) with respect to a state-of-the-art cut selection heuristic. The GNN trained by the gap closed score function provides a good proxy, though there is room for improvement for both GNNs with respect to the perfect predictor.

\section{Discussion and open problems}
\label{sec:conclusions}

In this paper, we have presented the first sample complexity lower bounds on the learning-to-cut task and we have empirically analyzed the relationship between two score functions used to assess the quality of a cut. In the sample complexity bounds, no analysis was conducted on the cut candidates that actually close the gap, i.e., cut off the fractional solution. This could give additional information to give better sample complexity bounds in the case of gap closed. Therefore, we conjecture that it is possible to obtain a better upper bound of the sample complexity for the gap closed score because, implicitly, a restricted number of cuts (only those cutting off the fractional solution) are required.



\section{Acknowledgements}

Both authors gratefully acknowledges support from the Office of Naval Research (ONR) grant N00014-24-1-2645.


\bibliographystyle{alpha}

\bibliography{sample}

\appendix

%
%

\section{Proofs of main results}

\begin{proof}[Proof of Theorem \ref{theorem:lowerboundtransfer}]
Theorem \ref{theorem:lowerbound:samplecomplexity} guarantees that $$m_L(\epsilon, \delta) \geq \frac{\fat_{\mathcal{F}_{s, \sigma'}}(\frac{\epsilon}{\alpha})  - 1 }{ \alpha} $$holds for any $ 0 < \epsilon < 1$ and $0 < \delta < 10^{-2}$ and $\epsilon <\frac{1}{4}$.  
    Since $\mathcal{F}$ verifies Assumptions \hyperlink{assumptions1}{1 and 2}, we first use Lemma \ref{transferLemma} and select $\alpha = \epsilon < \frac{1}{4}$ to get
    $$m_L(\epsilon, \delta)  \geq \frac{\VCdim(\mathcal{F}[n])-1}{\epsilon} \geq \frac{\VCdim(\mathcal{F}[n])}{2\epsilon}$$
    where the last inequality holds provided $\VCdim(\mathcal{F}[n]) \geq 2$.
\end{proof}

\begin{lemma}[Transfer Lemma]\label{transferLemma}
    With the same notations and assumptions made on the concept class described in Subsection \ref{subsection:results1} and verifying Assumptions \hyperlink{assumptions1}{1 and 2}, then for every  $ \gamma \in (0,\frac{1}{2}) $ $$  \fat_{\mathcal{F}_{s, \sigma'}}(\gamma) \geq \VCdim(\mathcal{F}[n])   $$	
\end{lemma}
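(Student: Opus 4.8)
The plan is to exhibit a set of $d := \VCdim(\mathcal{F}[n])$ ILP instances that is $\gamma$-fat-shattered by $\mathcal{F}_{s,\sigma'}$ for every $\gamma \in (0,\tfrac12)$, by importing a $\sgn$-shattered set of the restricted class and converting its sign pattern, through $\sigma'$ and the score, into a margin separation. First I would fix the coordinate $i^\star := \argmax_{i\in[m]}\VCdim(\mathcal{F}_i[n])$, so that $d = \VCdim(\mathcal{F}_{i^\star}[n])$. By Assumption 2 the VC-dimension of the restriction $\mathcal{F}_{i^\star}[n](c)$ does not depend on the fixed part $c$, so I am free to choose $c$ to be a convenient ``gadget'' encoding of the constraint data $(A^\star,b^\star)$, and then obtain objective vectors $o_1,\dots,o_d\in\R^n$ that are shattered by $\mathcal{F}_{i^\star}[n](c)$. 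The candidate fat-shattered set is $\{I_1,\dots,I_d\}$, where $I_j$ is the instance whose constraint data is $(A^\star,b^\star)$ and whose objective is $o_j$.

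Next I would transfer the shattering from signs to CG-weights. Fix any labeling $y\in\{-1,+1\}^d$; shattering provides $f\in\mathcal{F}$ whose $i^\star$-th coordinate satisfies $f_{i^\star}(I_j)\ge 0$ exactly when $y_j=+1$. Applying the squeezing function, the hypothesis $\sigma'(f(I_j))$ has its $i^\star$-th CG-weight in $[\tfrac12,1]$ when $y_j=+1$ and in $[0,\tfrac12)$ when $y_j=-1$, by the threshold property of $\sigma'$ at $0$. To make the score depend, in the relevant way, only on this coordinate, I would use Assumption 1: scaling the remaining $m-1$ output coordinates and translating the input (operations that keep $f\in\mathcal{F}$) to drive those coordinates into a fixed, ``inactive'' regime, while preserving the sign of the $i^\star$-th coordinate on each $I_j$.

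It then remains to turn the two weight regimes into two well-separated score values. Here I would design the gadget $(A^\star,b^\star)$ so that, for the CG cut generated by the weight vector, crossing the threshold $u_{i^\star}=\tfrac12$ flips the floor term $\lfloor u_{i^\star} b^\star_{i^\star}\rfloor$ and hence toggles whether a prescribed fractional LP vertex is cut off; this yields two score values separated by a fixed positive constant for both the gap-closed and the branch-and-cut tree-size scores. After the normalization of the Remark (labels $0$ and square loss, score valued in $[0,1]$), these two values can be taken to be $0$ and $1$; placing every witness at the midpoint $r_j=\tfrac12$ shows that each labeling $y$ is realized with margin $\gamma$ for any $\gamma<\tfrac12$. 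Hence $\{I_1,\dots,I_d\}$ is $\gamma$-fat-shattered and $\fat_{\mathcal{F}_{s,\sigma'}}(\gamma)\ge d=\VCdim(\mathcal{F}[n])$.

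The main obstacle is this last step: building a single gadget $(A^\star,b^\star)$ whose score gap under the $\tfrac12$-threshold holds simultaneously for both scores, while the inactive coordinates provably cannot perturb the score. The difficulty is that $\sigma'$ only guarantees that negative outputs map into $[0,\tfrac12)$, not to $0$, so the remaining $m-1$ constraints still receive nonzero CG-weights; I expect to neutralize them by choosing $(A^\star,b^\star)$ so that those constraints are redundant (non-binding at the relevant vertex) in the gadget, leaving constraint $i^\star$ as the only one that governs the generated cut, and to exhibit explicit fractional solutions whose cut-off status moves the normalized score between $0$ and $1$.
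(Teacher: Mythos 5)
Your high-level strategy matches the paper's: import a $\sgn$-shattered set for the restricted class, use the threshold of $\sigma'$ at $0$ to turn each sign pattern into a choice between two CG-weight regimes, and build a gadget in which the two regimes produce two well-separated values of both scores. However, there is a genuine gap in where you embed the shattered vectors. You place them as the \emph{objective} vectors $o_1,\dots,o_d$ of the instances while fixing the constraint data $(A^\star,b^\star)$. The shattered vectors are handed to you by the VC-shattering and can be arbitrary points of $\R^n$; once they sit in the objective, the LP optimum of your gadget polytope, the identity of the "prescribed fractional vertex," the gap-closed value, and the branch-and-cut tree size all depend on $o_j$ in a way you cannot control. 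For many objectives the LP optimum is already integral (both cuts then score identically), or the vertex you intend to cut off is not the LP optimum at all, so the claimed fixed score separation between the two cuts simply fails for some instances in the set. Your closing paragraph identifies the inactive-coordinate issue but not this one, which is the step that breaks.

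The paper's proof avoids this by putting the shattered vectors $a_i$ into a \emph{redundant constraint row} $a_i^{\mathsf T}\x\le 0$: Assumption~1 (translation invariance of the input) lets one translate the shattered set so that no $a_i$ meets the positive orthant, whence $a_i^{\mathsf T}\x\le 0$ is implied by $\x\ge 0$ and the feasible region and objective are \emph{identical} across all $d$ instances. The two candidate CG cuts (one that recovers the integral optimum, one that is redundant) then have the same pair of scores, separated by $\Omega(\gamma)$ for gap closed and by a constant for tree size, on every instance simultaneously, and the shattered row only serves to steer $\sigma'\circ f$ into one weight regime or the other. (This also matches the restriction $\mathcal{F}[n]$ under the concatenation encoder $(A,\b,\c)$, whose first $n$ coordinates are a row of $A$ rather than $\c$.) To repair your argument you would need either to adopt this redundant-constraint embedding, or to supply a mechanism guaranteeing a uniform score gap for arbitrary objective vectors, which your gadget as described does not provide.
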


\begin{proof}
Remind that the squeezing function (mapping each coordinate output of the underlying concept class to $[0,1]$) $\sigma'$ verifies $\sigma'((-\infty, 0)) \subset [0, \frac{1}{2})$, $\sigma'([0, +\infty)) \subset [\frac{1}{2}, 1]$ and $(0,1) \subset \sigma'(\R)$. 

Let $0 < \gamma < \frac{1}{2}$ and let $r:= \text{VCdim}(\mathcal{F}[n]) = \max_{i\in[m]} \VCdim(\mathcal{F}_i[n])$. Without loss of generality, we will suppose that the index maximizing this quantity is $i=1$. Therefore, there exists $a_1, \cdots, a_r$ that are shattered by $\sgn(\mathcal{F}[n])$. 
For every labeling $(y_1, \cdots, y_r) \in \{-1,1\}^{r}$, there exists $g \in \mathcal{F}$ such that $(g(a_i))_1 \geq 0$ if $y_i =1$ and $(g(a_i))_1 <  0$ if $y_i  =-1$.

Thanks to Assumption \hyperlink{assumtion1}{1}, we can suppose without loss of generality that the vectors $a_1, \cdots, a_r$  \textbf{do not intersect the positive orthant}. Indeed if they did, we can translate all of them by some vector $\mu$, (in the negative orthant for example), and there are corresponding functions in the concept class to shatter the new vectors. This insures that we can restrict to a list of instances in the positive orthant that do not need to be translated.

We construct $r$ instances, described by linear equalities, to be fat-shattered by $\mathcal{F}_{s, \sigma'}$ with margin $\gamma$ as follows:
\[
\begin{aligned}
  P_{i}:= \{ x \in \mathbb{R}^2: a_i^{t}\, {\bf x} \leq 0, \; 2 x_1 \leq 4, \;  
  2 x_2 \leq  \frac{5}{2}+ 2 \gamma,    \; {\bf{x}} \geq 0 \}
\end{aligned}
\]
\[
\begin{aligned}
       I_i:= \max\{x_1 + x_2 : {\bf x } \in P_i,  \; \x \in \Z^2\}.
\end{aligned}
\]

Our construction of instances can be lifted to $n$ variables and $m$ constraints, simply by adding useless constraints $0 \leq 0$ and keeping the same objective.

Although the constraint $2x_1 \leq 4$ is indeed equivalent to $x_1 \leq 2$, we keep it under that form  as it will be more convenient for our computations.
 First, since the $a_i$'s are not intersecting the positive orthant,  the first constraint is redundant, and we will use the vectors $a_i$ to shatter the instances. For each instance, the objective of the relaxed problem at the optimum is $2+\frac{5}{4}+\gamma$, and one solution is given by $x_1^{*}=2$ and $x_2^{*} = \frac{5}{4}+\gamma$.

Before jumping into the CG cut computations, we first make the following observation: due to the stability under scaling of Assumption \hyperlink{assumtion1}{1}, we can restrict to functions of the concept class such that the first coordinate after applying $\sigma'$ is so small that the associated CG weight $u_1$ (associated to the constraint $a_i^{t}{\bf x} \leq 0$) does not impact the following computations of the CG cuts as we shall see.

In the following, we suppose that $0 \leq u_1 < \frac{1}{2} $ to get rid of the first useless constraint.
Consider the two regions in the $u_2$, $u_3$ space associated to the second and third constraint, giving rise to the  CG-cuts:
 \begin{itemize}
     \item corresponding to the weights $ \frac{1}{2} \leq u_2 \leq 1 - \frac{5}{36}\left(\frac{5}{2} + 2\gamma\right)$ and $ \frac{1}{2} \leq u_3 < \frac{20}{36}$. For each instance, this  yields the inequality:
 $\lfloor 2u_2 \rfloor x_1 + \lfloor 2 u_3 \rfloor x_2 \leq \lfloor 4u_2 + u_3(\frac{5}{2}+ 2\gamma) \rfloor \iff  x_1 +  x_2 \leq 3$ since  $\gamma < \frac{1}{2}$. 
 
 Then the two new vertices of the admissible region are $(2,1)$ and $(2-\gamma, 1 +\gamma)$, for both of them the objective value is $3$, so the amount of  gap closed is $\frac{1}{4}+\gamma$ (the improvement ratio is $\frac{\frac{1}{4}+\gamma}{2+\frac{5}{4}+\gamma} \geq \frac{\gamma}{5}$ since $0< \gamma <1$ (here, the cut actually gives the integral solution).
 
 \item For any $\frac{1}{2} \leq u_2 \leq (1 - \frac{5}{16}) - \frac{\gamma}{4} $ and $0 \leq  \frac{3-(4-\frac{5}{4}-\gamma)}{\frac{5}{2}+2\gamma}   \leq  u_3 < \frac{1}{2}$, the CG-cut associated with $(u_1, u_2, u_3)$ yields the inequality $x_1 \leq 3$: this cut is redundant, the solution is the same as previously  and  the gap closure is $0$.
 \end{itemize}

Hence, we have two CG-cuts that yield for each instance two scores that are  at least $\Omega(\gamma)$  from each other. 

\textit{\textbf{In the case of B\&C-tree size:} Those CG cuts can also be used for the B\&C tree size score: on the one hand, it is clear that the branch-and-cut tree size after adding the first CG-cut to one, as solving the LP only once gives an optimal solution that is integral. On the other hand, adding the redundant cut associated with the second cut at the root gives a branch-and-cut tree size of at least $3$ nodes since one needs to branch at least once on a variable to obtain the integral solution. Therefore we have two CG-cuts that will yield two scores that are at distance $1$ for any of the $n $ instances.}


For any function $\tilde{g}$ in $\mathcal{F}$, $\tilde{g}: \R^{8} \rightarrow \R^{3}$,  we refer to $\tilde{g}$ as $\begin{pmatrix}
           A \\
           b \\ 
           c \\
         \end{pmatrix}  \mapsto \begin{pmatrix}
           g_1(A_1, \cdots) \\
           g_2(A_1,  \cdots) \\
           g_3(A_1, \cdots)
         \end{pmatrix} $ , where $A_1 $ is the first row of $A$. Since the vectors $a_i$ are shattered, for every $i \in [n]$, there exists $\tilde{g}\in \mathcal{F}$ such that $ \tilde{g}(P_i) = \begin{pmatrix}
           g_1(a_i, \cdots) \\
           g_2(a_i, \cdots) \\
           g_3(a_i, \cdots)
         \end{pmatrix}   $ is  equal to $\begin{pmatrix}
           q_i \\
           r_i \\
           \eta_i
         \end{pmatrix}$ for some $\eta_i \geq 0$ if  $y_i =1$, and $\begin{pmatrix}
           q'_i \\
            r'_i \\ 
            -\eta_2 
         \end{pmatrix} $ for some $   \eta_2 > 0  $ if $y_i = -1$.  

Here we implicitely used Assumption \hyperlink{assumption1}{2}, by supposing that the VC dimension of one of the coordinates of the functions in $\mathcal{F}$, when restricted to the first $n$ entries, does not depend on the choice of the coordinate, nor other entries of the instance. Using again the Assumption \hyperlink{assumption1}{1} of closure under scaling, we rescale the second component while keeping the other untouched, so that $q_i $ and $q'_i$ are small (mentioned before the computation of the CG weights), and such that all $r_i$ verify $ \sigma'(r_i) \in [\frac{1}{2}, 1 - \frac{5}{36}(\frac{5}{2} + 2\gamma)]$ and all $\sigma'(r'_i) \in [\frac{1}{2}, 1 - \frac{5}{16}]$, i.e. the right intervals corresponding to the CG weights computed previously. This is possible as $(0,1) \subset \sigma'(\R)$ by assumption.

          Also, since $\sigma'((-\infty, 0)) \subseteq [0, \frac{1}{2})$ and $\sigma'([0, +\infty)) \subseteq [\frac{1}{2},1]$, this implies that when $y_i=-1$, the weights obtained after applying the squeezing function  generates the CG-cut ${\bf u}_1$, and when $y_i = -1$,  the weights generating the CG-cut ${\bf u}_2$.

          Therefore, the instances $P_1, \cdots, P_n$ with $n = \VCdim(\mathcal{F})$ are $\gamma$-fat shattered (with a witness that depends on the score considered), so $\fat_{\mathcal{F}_{s, \sigma'}}(\gamma) \geq  \VCdim(\mathcal{F}[n]) = \max_{i\in[m]} \VCdim(\mathcal{F}_i[n])$.
\end{proof}

\begin{definition}[Pseudo-dimension]\label{def:pseudo-dimension}
    Let $\F$ be a non-empty collection of functions from an input space $\I$ to $\R$. 
    For any positive integer $t$, we say that a set $\{I_1, ..., I_t\} \subseteq \I$ is pseudo-shattered by $\F$ if there exist real numbers $s_1, \dots, s_t$ such that 
    \begin{equation*}\label{eq:def:pseudodimension}
        2^t = \left|\left\{ \left( \sgn(f(I_1)-s_1),\dots, \sgn(f(I_t)-s_t) \right) : f \in \F \right\}\right|. 
    \end{equation*}
    The \emph{pseudo-dimension of $\F$}, denoted as $\Pdim(\F)  \in \mathbb{N} \cup \{+\infty\}$, is the size of the largest set that can be pseudo-shattered by $\F$. 
    \end{definition}

\begin{proof}[Proof of Corollary \ref{corollary:learningcomplexity}]
    We use here the following fact (see Definition above) that for any concept class $\mathcal{F}$ with real outputs, $\text{Pdim}(\mathcal{F}) \geq \fat_{F}(\gamma)$ for all $\gamma > 0$, see for example  \cite[Theorem 11.13]{anthony2009neural}.
    
    Also, by assumption each  $\mathcal{F}_i[n]$ is closed under translation of the input so if a set of vectors $x_1, \cdots, x_n$ are pseudo-shattered, there are also shattered, leading to 
    $$\VCdim (\mathcal{F}_i[n]) \geq \Pdim( \mathcal{F}_i[n] ) \geq \fat_{\mathcal{F}_i[n]}(\gamma)$$
    proving the claim.
\end{proof}

\begin{proof}[Proof of Proposition \ref{lowerbound:neuralnetworks}]
 This is a direct application of Theorem \ref{theorem:lowerboundtransfer} combined with state-of-the art VC-dimension lower bound for ReLU neural networks \cite[Theorem 3]{bartlett2019nearly}. 
\end{proof}

\begin{proof}[Proof of Theorem \ref{theorem:lowerboundtransfertableau}]
    We perform a similar reasoning as in the proof of Theorem \ref{theorem:lowerbound:samplecomplexity} with the following ingredients (and similar notations):

    \begin{itemize}
    \item  we use Lemma \ref{lemma:transfertableau} and justify the lifting to more variables.
    \item We do not have to perform the CG weights computation as previously, only to make sure that we can shatter the vector $a_i$ of the redundant constraint $a_i^{t}{\bf x } \leq 0 $ added to the instance of Lemma \ref{lemma:transfertableau}, in such a way that the score after is maximized for the index of the row corresponding to one or the other cut. 
    \end{itemize}
    This allows us to shatter a collection of instances in order to choose one or the other CG-cut, given the arbitrary set of labels $y_1, \cdots, y_r$.
\end{proof}

\begin{lemma}\label{lemma:transfertableau}
There exists an two-variable ILP instance with two constraints such that the two cuts from the tableau have both scores at distance $> \frac{1}{5}$. 

\end{lemma}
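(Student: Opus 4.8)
The plan is to prove the lemma by exhibiting one explicit instance and verifying the claim through a direct tableau computation, rather than by an abstract argument. The natural candidate is the two-variable, two-constraint ILP already introduced in Section~\ref{section:branchandcut},
\[
\max\{5x_1 + 8x_2 : x_1 + x_2 \leq 6,\ 5x_1 + 9x_2 \leq 45,\ \x \geq 0,\ \x \in \Z^2\}.
\]
Its LP relaxation is optimized at $x^* = (\tfrac{9}{4}, \tfrac{15}{4})$ with value $\tfrac{165}{4}$, while the integer optimum is $(0,5)$ with value $40$, so the gap equals $\tfrac54$. The first fact I would record is structural: at the LP optimum both original constraints are tight, so both slacks are nonbasic and both structural variables $x_1,x_2$ are basic \emph{and} fractional. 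Consequently the optimal Simplex tableau has two fractional rows and thus yields exactly two Gomory cuts to compare, which is the feature that distinguishes this instance from the ``box'' instance used in Lemma~\ref{transferLemma} (where only one coordinate was fractional).

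Next I would write the optimal tableau explicitly. Inverting the basis matrix $\left(\begin{smallmatrix} 1 & 1 \\ 5 & 9 \end{smallmatrix}\right)$ expresses the basic variables in terms of the slacks $s_1,s_2$ as $x_1 = \tfrac94 - \tfrac94 s_1 + \tfrac14 s_2$ and $x_2 = \tfrac{15}{4} + \tfrac54 s_1 - \tfrac14 s_2$. Reading the fractional-part (Gomory) cut off each row and substituting back $s_1 = 6 - x_1 - x_2$ and $s_2 = 45 - 5x_1 - 9x_2$ gives, from the $x_1$-row, the cut $4x_1 + 7x_2 \leq 35$, and from the $x_2$-row, the cut $2x_1 + 3x_2 \leq 15$. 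These are precisely the two tableau cuts, and this substitution step is the only routine computation.

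Then I would evaluate both scores on each cut. For the gap-closed score, adding $2x_1 + 3x_2 \leq 15$ moves the LP optimum to the integral point $(0,5)$ of value $40$, closing the entire gap (score $1$), whereas adding $4x_1 + 7x_2 \leq 35$ moves it to the fractional point $(\tfrac73, \tfrac{11}{3})$ of value $41$, closing only $\tfrac14$ of the total gap $\tfrac54$, i.e.\ a fraction $\tfrac15$ (score $\tfrac15$); the two scores are therefore at distance $\tfrac45 > \tfrac15$. For the branch-and-cut tree-size score, the cut $2x_1 + 3x_2 \leq 15$ already produces an integral relaxation, so the node closes at the root and the tree has a single node, while the cut $4x_1 + 7x_2 \leq 35$ leaves a fractional optimum and forces at least one branching (branching first on $x_1$, exactly as in Section~\ref{section:branchandcut}), giving a tree of at least three nodes; mirroring the argument in Lemma~\ref{transferLemma}, these two tree sizes are separated in the (normalized) tree-size score by more than $\tfrac15$.

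The part I expect to require the most care is the tree-size scoring rather than the gap-closed one. I must (i) confirm both tableau rows are genuinely fractional so that two distinct cuts really exist, (ii) fix the branching rule so that the weak cut \emph{provably} enlarges the tree rather than merely possibly doing so, and (iii) check that the normalization placing the tree-size score in $[0,1]$ sends the pair $(1,\ \geq 3)$ to values at distance exceeding $\tfrac15$, consistently with the convention already used in Lemma~\ref{transferLemma}. The gap-closed conclusion, by contrast, follows from the two small LP computations above and needs no normalization discussion.
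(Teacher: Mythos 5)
Your proposal is correct and follows essentially the same route as the paper, which proves the lemma by exhibiting exactly this instance ($\max\{5x_1+8x_2 : x_1+x_2\le 6,\ 5x_1+9x_2\le 45\}$) and observing that one tableau cut yields the integral optimum $(0,5)$ while the other yields the fractional point $(\tfrac73,\tfrac{11}{3})$. Your explicit basis inversion, derivation of the two cuts $4x_1+7x_2\le 35$ and $2x_1+3x_2\le 15$, and the gap-closed values $\tfrac15$ versus $1$ simply make rigorous the computation the paper leaves implicit (and your caution about normalizing the tree-size score is warranted, since the paper glosses over that point as well).
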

\begin{lemma}
Such instance is the same as the one illustrated in Section \ref{section:branchandcut} and illustrated in Figure \ref{figure:polytope1}
    \begin{align*}
\max \quad & 5x_1 + 8x_2 \\
\text{subject to} \quad & x_1 + x_2 \leq 6 \\
& 5x_1 + 9x_2 \leq 45 \\
& x_1, x_2 \geq 0 \\
& x_1, x_2 \in \mathbb{Z}
\end{align*}

\end{lemma}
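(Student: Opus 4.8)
The plan is to prove the lemma by explicit construction, taking the two-variable, two-constraint instance already displayed in Section~\ref{section:branchandcut}, namely $\max\{5x_1+8x_2 : x_1+x_2\le 6,\ 5x_1+9x_2\le 45,\ \x\ge \0,\ \x\in\Z^2\}$, and computing the two tableau cuts together with both scores directly. First I would solve the LP relaxation: the objective makes the two structural constraints binding, so the optimal basis is $\{x_1,x_2\}$, the LP optimum is $\x^\ast=(\tfrac94,\tfrac{15}{4})$ with value $\tfrac{165}{4}$, while a short enumeration of feasible lattice points shows the integer optimum is $(0,5)$ with value $40$. This fixes the integrality gap at $\tfrac{165}{4}-40=\tfrac54$, which serves as the denominator for the gap-closed score.

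Next I would read the two Chv\'atal--Gomory cuts off the optimal tableau. Introducing slacks $s_1,s_2$, the basis matrix $\left(\begin{smallmatrix}1&1\\5&9\end{smallmatrix}\right)$ has determinant $4$ and inverse $\tfrac14\left(\begin{smallmatrix}9&-1\\-5&1\end{smallmatrix}\right)$, so the two tableau rows express $x_1$ and $x_2$ in terms of $s_1,s_2$ with fractional right-hand sides $\tfrac14$ and $\tfrac34$ respectively. Forming the fractional-part (Gomory) cut of each row and substituting the slacks $s_1=6-x_1-x_2$, $s_2=45-5x_1-9x_2$ back in yields the cut $4x_1+7x_2\le 35$ from the $x_1$-row and the cut $2x_1+3x_2\le 15$ from the $x_2$-row; the first matches the cut already quoted in Section~\ref{section:branchandcut}.

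Finally I would evaluate both scores on these two cuts. Adding $2x_1+3x_2\le 15$ forces $5x_1+9x_2\le 45$ to remain tight only at $(0,5)$, so the new LP optimum is the integral point $(0,5)$ of value $40$: this cut closes the \emph{entire} gap and, since the root relaxation is already integral, gives a branch-and-cut tree of a single node. Adding $4x_1+7x_2\le 35$ instead moves the LP optimum to the fractional vertex $(\tfrac73,\tfrac{11}{3})$ of value $41$, closing only $\bigl(\tfrac{165}{4}-41\bigr)/\tfrac54=\tfrac15$ of the gap and forcing at least one branching, hence a tree of at least three nodes. Consequently the two tableau cuts are separated by $1-\tfrac15=\tfrac45>\tfrac15$ in the gap-closed score and by a tree-size difference of at least two, so both scores differ by strictly more than $\tfrac15$; lifting to $n$ variables and $m$ constraints by padding with trivial $0\le 0$ rows (exactly as in the Transfer Lemma~\ref{transferLemma}) preserves the construction.

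The arithmetic is light, and the step needing the most care is the claim that $2x_1+3x_2\le 15$ drives the new LP optimum precisely onto the integral point $(0,5)$ rather than some other fractional vertex; I would confirm this by checking that along $5x_1+9x_2=45$ the cut together with $x_1\ge 0$ pinches the feasible segment down to the single point $(0,5)$. The only genuinely delicate issue is making the branch-and-cut tree-size comparison precise, since the exact node count for the $4x_1+7x_2\le 35$ branch depends on the branching rule; I would therefore rely only on the robust distinction (root integral versus a fractional relaxation requiring at least one branching), which already yields a separation exceeding $\tfrac15$ under any fixed branching convention.
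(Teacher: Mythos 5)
Your proposal is correct and takes essentially the same route as the paper: it verifies the claim on the very instance displayed, identifying the two tableau cuts $4x_1+7x_2\le 35$ and $2x_1+3x_2\le 15$ and showing the first closes only $\tfrac15$ of the gap (new fractional vertex $(\tfrac73,\tfrac{11}{3})$) while the second closes all of it (new optimum $(0,5)$, tree of one node), exactly the CG1/CG2 separation the paper asserts via Section~\ref{section:branchandcut} and Figure~\ref{figure:polytope1}. The only difference is one of completeness: you carry out the basis-inverse and score computations explicitly, whereas the paper delegates them to the earlier example and the figure.
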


\begin{figure}[h!]
    \centering
    \includegraphics[height=6cm]{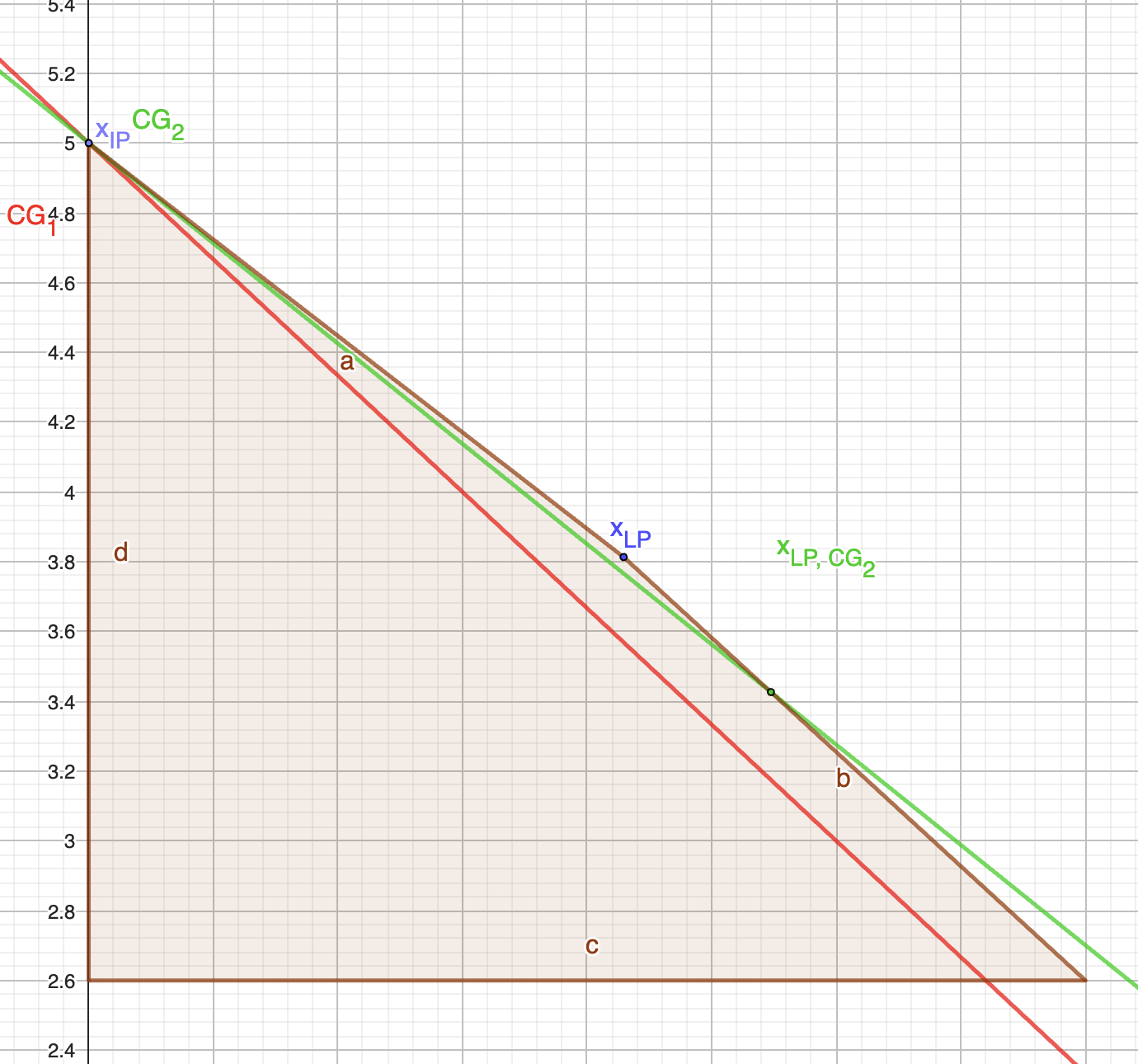}
    \caption{Example of $2D$ instance used to construct the lower-bound and prove Lemma \ref{lemma:transfertableau}. Both cuts CG1 and CG2 are possible cuts from the Optimal Tableau. The cut derived from CG1 gives directly the optimum $(0,5)$, whereas the cut CG2 gives a suboptimal fractional solution. Both cuts are then mapped via the redundant constraint of the collection of instances, to $\gamma$-shatter the instances according to the score considered.}
    \label{figure:polytope1}
\end{figure}

\begin{proof}[Proof of Proposition \ref{theorem:lowerboundtransfertableau}]
     This is a direct application of Theorem \ref{theorem:lowerboundtransfertableau} combined with state-of-the art VC-dimension lower bound for ReLU neural networks \cite[Theorem 3]{bartlett2019nearly}. 
\end{proof}

\end{document}